\author{Quentin Le Hou\'erou \and Ludovic Levy Patey}
\title{$\Pi^0_4$ conservation of the Ordered Variable Word theorem}
\date{August 2023}
\newtheorem{theorem}{Theorem}
\numberwithin{theorem}{section}
\newtheorem{lemma}[theorem]{Lemma}
\newtheorem{question}[theorem]{Question}
\newtheorem{proposition}[theorem]{Proposition}
\newtheorem{remark}[theorem]{Remark}
\newtheorem{definition}[theorem]{Definition}
\newtheorem{corollary}[theorem]{Corollary}
\newtheorem{example}[theorem]{Example}
\newtheorem*{rep@theorem}{\rep@title}
\newcommand{\newreptheorem}[2]{%
\newenvironment{rep#1}[1]{%
 \def\rep@title{#2 \ref{##1}}%
 \begin{rep@theorem}}%
 {\end{rep@theorem}}}
\newcommand{\RCA}{\mathsf{RCA}}
\newcommand{\WKL}{\mathsf{WKL}}
\newcommand{\ACA}{\mathsf{ACA}}
\newcommand{\BSig}{\mathsf{B}\Sigma^0}
\newcommand{\ISig}{\mathsf{I}\Sigma^0}
\newcommand{\RT}{\mathsf{RT}}
\newcommand{\TT}{\mathsf{TT}}
\newcommand{\MTT}{\mathsf{MTT}}
\newcommand{\OVW}{\mathsf{OVW}}
\newcommand{\GR}{\mathsf{GR}}
\renewcommand{\L}{\mathcal{L}}
\newcommand{\M}{\mathcal{M}}
\newcommand{\NN}{\mathbb{N}}
\newcommand{\uh}[0]{{\upharpoonright}}
\newcommand{\finsub}{\subseteq_{\mathtt{fin}}}
\DeclareSymbolFont{arrows3}{LS2}{stixtt}{m}{n}
\DeclareMathSymbol{\smashtimes}{\mathbin}{arrows3}{"A4}
\def\qt#1{``#1''}%
\begin{document}

\maketitle

\begin{abstract}
A left-variable word over an alphabet~$A$ is a word over~$A \cup \{\star\}$ whose first letter is the distinguished symbol~$\star$ standing for a placeholder.
The Ordered Variable Word theorem ($\OVW$), also known as Carlson-Simpson's theorem, is a tree partition theorem, stating that for every finite alphabet~$A$ and every finite coloring of the words over~$A$, there exists a word $c_0$ and an infinite sequence of left-variable words $w_1, w_2, \dots$
such that $\{ c_0 \cdot w_1[a_1] \cdot \dots \cdot w_k[a_k] : k \in \NN, a_1, \dots, a_k \in A \}$ is monochromatic.

In this article, we prove that $\OVW$ is $\Pi^0_4$-conservative over~$\RCA_0 + \BSig_2$. This implies in particular that $\OVW$ does not imply $\ACA_0$ over~$\RCA_0$. This is the first principle for which the only known separation from~$\ACA_0$ involves non-standard models.
\end{abstract}

\section{Introduction}

A \emph{tree partition theorem} is a statement of the form \qt{For every finite coloring of the finite subtrees of an infinite tree-like structure, there exists an isomorphic sub-structure whose finite subtrees are monochromatic.} Perhaps the simplest tree partition theorem is the Tree Theorem for singletons ($\TT^1)$ which says that for every finite coloring of~$2^{<\omega}$, there is a monochromatic subset~$T \subseteq 2^{<\omega}$ such that $(T, \preceq)$  is isomorphic to $(2^{<\omega}, \preceq)$.

Tree partition theorems play an important role in structural Ramsey theory. Many proofs of existence of \emph{big Ramsey numbers} are reduced to higher order versions of these theorems. For example, the existence of big Ramsey numbers for partitions of the rationals~\cite{devlin1980some} or of the Rado graph~\cite{sauer2006coloring} are both reduced to Milliken's tree theorem~\cite{milliken1979ramsey}. 

In this article, we are interested in tree partition theorems from the viewpoint of reverse mathematics. \emph{Reverse mathematics} is a foundational program whose goal is to find optimal axioms to prove ordinary theorems. It uses the framework of subsystems of second-order arithmetic, with a base theory, $\RCA_0$, capturing \qt{computable mathematics}. See any of~\cite{hirschfeldt2015slicing,dzhafarov2022reverse,simpson2009subsystems} for a good introduction to reverse mathematics, and their main systems, $\RCA_0, \WKL_0$ and $\ACA_0$.

Both the Tree Theorem ($\TT$) and Milliken's tree theorem ($\MTT$) have been extensively studied from a reverse mathematical viewpoint (see~\cite{chong2020strength,chong2023conservation,chubb2009reverse,corduan2010reverse,dzhafarov2017coloring,patey2016strength} for $\TT$ and \cite{angles2020milliken} for $\MTT$).  The restrictions of $\TT$ and $\MTT$ to colorings of singletons are \emph{computably true}, that is, every instance admits a solution computable in the instance. Thus, their strength can be measured only in terms of the amount of induction necessary to prove them.

\subsection{Ordered Variable Word theorem}

In this article, we study the reverse mathematics of a stronger tree partition theorem, due to Carlson and Simpson~\cite{carlson1984dual}, called the Ordered Variable Word theorem~\footnote{This statement is also called \emph{Carlson-Simpson theorem} in combinatorics~\cite{dodos2014density}, but should be distinguished from the Variable Word theorem~\cite{miller2004effectiveness}, also known as Carlson-Simpson lemma in reverse mathematics, which is a similar statement, but where each variable type is allowed to appear infinitely often.}.
Fix a finite alphabet~$A$ and a distinguished variable symbol~$\star$.
A \emph{word} over~$A$ is a finite sequence $w = a_0\dots a_{k-1}$ where $a_i \in A$. We write $w(i)$ for $a_i$ and let $|w| = k$. 
A \emph{left variable word} over~$A$ is a word $w$ over~$A \sqcup \{\star\}$ such that~$w(0) = \star$. Given a left variable word~$w$ and a letter~$a \in A$, we let $w[a]$ be the word of length~$|w|$ obtained from~$w$ by substituting every occurrence of~$\star$ by~$a$. Beware, the notations $w(i)$ and $w[a]$ should not be confused: the former yields a letter, while the latter is a substitution.

\begin{theorem}[Carlson-Simpson~\cite{carlson1984dual}]
For every finite alphabet~$A$ and every finite coloring $f : A^{<\omega} \to \ell$,
there is a word~$c_0$ over~$A$ and an infinite sequence of left variable words $w_1, w_2, \dots$ such that the following set is monochromatic:
$$
\{ c_0 w_1[a_1] w_2[a_2] \cdots w_k[a_k] : k \in \NN, a_1, \dots, a_k \in A \}
$$
\end{theorem}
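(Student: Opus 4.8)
The plan is to deduce the theorem from the Hales--Jewett theorem, in two stages: a finitary \emph{ordered variable word} theorem obtained by iterating Hales--Jewett, and then a genuinely infinitary step, for which --- crucially --- a compactness argument is \emph{not} available.

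For the finitary stage I would prove: for every finite alphabet $A$, every $\ell$ and every $m$, there is an $N$ such that every colouring $\chi$ of the words over $A$ of length $\le N$ admits a word $c_0$ and left variable words $w_1,\dots,w_m$ with $|c_0 w_1\cdots w_m|\le N$ for which $\{\,c_0 w_1[a_1]\cdots w_k[a_k] : k\le m,\ a_i\in A\,\}$ is $\chi$-monochromatic. One works with a larger auxiliary dimension $d$ and peels blocks off from the left: having fixed $w_1,\dots,w_{i-1}$, one colours the block to be occupied by $w_i$ by the tuple of $\chi$-colours of the words $w_1[\cdot]\cdots w_{i-1}[\cdot]\,u\,y$ for all short suffixes $y$, and invokes Hales--Jewett to get a variable word $w_i$ that makes this whole family of colours independent of the substituted letter. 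After $d$ steps the $\chi$-colour of $w_1[a_1]\cdots w_k[a_k]$ depends only on the level $k$; regrouping so that the constant prefix of each block is pushed onto the end of the previous block turns each $w_i$ into a left variable word while preserving all colours at levels $\ge 1$; and since there are only $\ell$ colours for the $d$ levels, pigeonhole (with $d$ sufficiently large in terms of $\ell$ and $m$) isolates $m+1$ levels of a common colour, whence concatenating the variable words between consecutive chosen levels and absorbing the first ones into $c_0$ yields a genuinely monochromatic $m$-dimensional ordered variable word space. The parameters here grow roughly like a tower of exponentials in $d$.

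For the infinitary stage I would \emph{not} attempt compactness: since the lengths of the $w_i$ produced by the finitary theorem are unbounded as $d\to\infty$, the natural tree of finite approximations to a solution is infinitely branching and K\"onig's lemma gives nothing --- so, unlike for Ramsey's theorem, the infinite statement does not reduce to the finite one. Instead one argues directly with a standard infinitary tool: either idempotent ultrafilters, in the spirit of the Galvin--Glazer proof of Hindman's theorem (an idempotent ultrafilter on the semigroup of left variable words under concatenation, chosen to interact correctly with the substitution maps $w\mapsto w[a]$, from which $w_1,w_2,\dots$ are extracted with all finite products landing in a monochromatic set), or, more abstractly, Carlson's Ramsey theorem for the topological Ramsey space of infinite sequences of ordered variable words, an instance of the abstract Ellentuck theorem.

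I expect this infinitary stage to be the main obstacle. A plain idempotent ultrafilter controls only the products $w_{i_1}\cdots w_{i_k}$ read with a \emph{single} letter, whereas the conclusion is about $w_{i_1}[a_1]\cdots w_{i_k}[a_k]$ with the substituted letter varying block by block; reconciling the concatenation structure with the substitutions is exactly what forces the amalgamation/pigeonhole axiom of Carlson's framework (equivalently, a suitable strengthening of the ultrafilter). Moreover the word $c_0$ must not be substituted --- for colourings such as $u\mapsto u(0)$ a nonempty $c_0$ is genuinely unavoidable --- so $c_0$ must appear as an honest member of the monochromatic space rather than as an absorbed leftover, which is the source of much of the bookkeeping. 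It is precisely this ineffective infinitary step --- not the primitive-recursive finitary one --- that carries the logical strength of $\OVW$, which is why a conservation theorem, rather than an $\omega$-model, is the natural way to separate it from $\ACA_0$.
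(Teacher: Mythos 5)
The paper does not prove this theorem: it is stated as $\OVW$, cited to Carlson and Simpson~\cite{carlson1984dual}, and used as a black box, so there is no proof in the paper to compare against. What the paper \emph{does} prove in this direction is the finitary version with primitive-recursive bounds, \Cref{thm:carlson-simpson-finitaire}, obtained from Graham--Rothschild by coloring the $1$-dimensional subspaces of a combinatorial $X$-space by the $f$-color of the generating variable word truncated before its unique variable, extracting a monochromatic $(d{+}1)$-dimensional subspace, and cutting it before its last variable. Your finitary stage is essentially the same material --- iterating Hales--Jewett is, up to packaging, a proof of Graham--Rothschild --- but the paper's choice of auxiliary coloring makes the final level-by-level pigeonhole you invoke unnecessary.

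Your infinitary stage, as you acknowledge, is not an argument but a pointer to what one would need. The structural observations you make along the way are correct and worth recording: $\OVW$ does not follow from its finitary version by K\"onig's lemma or compactness (Miller and Solomon's computable instance with no $\Delta^0_2$ solution, cited in the paper, rules out any such reduction, and this is exactly why the separation from $\ACA_0$ here is obtained by conservation over a non-standard model rather than by cone avoidance); a nonempty constant prefix $c_0$ is genuinely forced, and your coloring $u \mapsto u(0)$ is the right witness; and a plain Galvin--Glazer idempotent on the semigroup of left-variable words under concatenation does not by itself control the substitutions $w \mapsto w[a]$, which is precisely where Carlson's abstract Ellentuck theorem for the Carlson--Simpson space earns its keep. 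So the proposal is a well-informed plan with the right ingredients and the right diagnosis of the obstacle, but the decisive infinitary step is left as a reference to the literature rather than carried out.
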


We let~$\OVW$ denote the statement above, standing for Ordered Variable Word. It can be seen as a problem whose \emph{instances} are pairs $(A, f)$, where $A$ is a finite alphabet and $f$ is a finite coloring over~$A^{<\omega}$. A \emph{solution} to $(A, f)$ is the given of the sequence $c_0, w_1, w_2, \dots$ as above.

The Ordered Variable Word theorem was used as a pigeonhole principle by Carlson and Simpson to prove a dual version of Ramsey's theorem~\cite{carlson1984dual}. It was later used by Hubi\v{c}ka~\cite{hubivcka2020big} to give a simple proof of the existence of big Ramsey degrees for the universal triangle-free graph. See the monograph of Dodos and Kanellopoulos~\cite{dodos2016ramsey} for an extensive combinatorial analysis of variable word theorems.
Friedman and Simpson~\cite{friedman2000issues} asked about its reverse mathematical strength.

Contrary to the Tree Theorem for singletons and Milliken's tree theorem for singletons which are computably true, Miller and Solomon~\cite{miller2004effectiveness} proved that $\OVW$ admits a computable instance with no $\Delta^0_2$ solutions. Therefore, $\OVW$ cannot be proven by Weak K\"onig's lemma ($\WKL_0$). Later, Liu, Monin and Patey~\cite{liu2019computable} constructed a computable instance of~$\OVW$ whose solutions compute a DNC function relative to~$\emptyset'$, that is, a function $f : \NN \to \NN$ such that $\forall e f(e) \neq \Phi^{\emptyset'}_e(e)$. On the other hand, Angl\`es d'Auriac et al.~\cite{angles2023carlson} proved that every computable instance of $\OVW$ admits a solution in any PA degree over~$\emptyset'$. The exact computable strength of $\OVW$ therefore lies between DNC degrees over~$\emptyset'$ and PA degrees over~$\emptyset'$. Note that the existence of the former is strictly weaker than~$\ACA_0$, while the latter implies it. Whether~$\OVW$ implies~$\ACA_0$ over~$\RCA_0$ was left open. We answer the question negatively:

\begin{theorem}
$\WKL_0 + \OVW$ does not imply~$\ACA_0$.
\end{theorem}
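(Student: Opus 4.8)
The plan is to derive the theorem from the $\Pi^0_4$-conservation result, using only the classical hierarchy of induction and collection axioms. The one substantive observation needed is that a \emph{single} $\Pi^0_4$ sentence already separates $\ACA_0$ from $\RCA_0 + \BSig_2$. To see this, recall first that $\ACA_0$ proves the induction scheme for every arithmetical formula: given arithmetical $\varphi(n)$, arithmetical comprehension produces the set $\{n : \varphi(n)\}$ and the induction axiom of $\RCA_0$ finishes, so in particular $\ACA_0 \vdash \ISig_2$. Recall second that $\RCA_0 + \BSig_2$ is conservative over its first-order part $\BSig_2$ for arithmetical sentences — pass to the $\Delta^0_1$-definable subsets of a first-order model of $\BSig_2$ — and that $\BSig_2$ is $\Pi^0_2$-conservative over $\ISig_1$, whose provably total recursive functions are exactly the primitive recursive ones.

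I would then fix $\tau$ to be the $\Pi^0_2$ sentence asserting that the Ackermann function is total (alternatively, the $\Pi^0_1$ sentence $\mathrm{Con}(\ISig_1)$). Since $\ISig_2 \vdash \tau$ we get $\ACA_0 \vdash \tau$; since the Ackermann function is not primitive recursive we have $\ISig_1 \nvdash \tau$, hence $\BSig_2 \nvdash \tau$ by the $\Pi^0_2$-conservativity above, hence $\RCA_0 + \BSig_2 \nvdash \tau$. Note that $\tau$, being $\Pi^0_2$, is in particular $\Pi^0_4$. Now the argument closes: suppose toward a contradiction that $\WKL_0 + \OVW \vdash \ACA_0$; then $\WKL_0 + \OVW \vdash \tau$, so by the $\Pi^0_4$-conservation result $\RCA_0 + \BSig_2 \vdash \tau$, contradicting the previous sentence. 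The level $\Pi^0_4$ of the conservation theorem is exactly what is consumed here.

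The only point that is not immediate, and where I expect whatever care is needed to go, is the passage from a conservation statement about $\OVW$ to one about $\WKL_0 + \OVW$. I expect this to cost nothing, for one of two reasons. Either the forcing that proves the conservation result is arranged so as to also meet requirements forcing $\WKL$ into the generic model — adjoining solutions to $\WKL$-instances never adds first-order strength (a low-basis-theorem phenomenon), so the same proof gives directly that $\WKL_0 + \OVW$ is $\Pi^0_4$-conservative over $\RCA_0 + \BSig_2$. Or one appeals to the standard generalisation of Harrington's theorem: for any theory $T \supseteq \RCA_0$ axiomatised by $\Pi^1_2$ sentences, $T + \WKL_0$ is $\Pi^1_1$-conservative over $T$; applying this with $T = \RCA_0 + \BSig_2 + \OVW$ (a $\Pi^1_2$-axiomatised theory, since $\OVW$ has the form $\forall f\,\exists Y$ with arithmetical matrix) and composing with the conservation result yields the same conclusion.

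Finally, it is worth recording why this route through non-standard models is unavoidable, as the abstract already notes: every $\omega$-model of $\OVW$ satisfies full first-order induction, hence $\ISig_2$ and $\tau$, so the failure of $\ACA_0$ can only be witnessed in a model where $\BSig_2$ holds but $\ISig_2$ does not. Thus no $\omega$-model argument can separate $\OVW$ from $\ACA_0$, and the conservation theorem — together with the elementary facts assembled above — is essential rather than merely convenient.
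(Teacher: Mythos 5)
Your proof is correct and follows essentially the same route as the paper's. Both arguments derive the separation from the $\forall\Pi^0_4$-conservation theorem by exhibiting a low-complexity arithmetical sentence provable from $\ACA_0$ but not from $\RCA_0 + \BSig_2$: you use the totality of Ackermann (a $\Pi^0_2$ statement) via $\Pi^0_2$-conservativity of $\BSig_2$ over $\ISig_1$, whereas the paper uses $\mathrm{Con}(\RCA_0)$ (a $\Pi^0_1$ statement) after first noting that $\WKL_0 + \OVW$ is $\forall\Pi^0_3$-conservative over $\RCA_0$ (via Parsons--Paris--Friedman) and then applying G\"odel's second incompleteness theorem. These are interchangeable witnesses and the two chains of conservativity collapse to the same information. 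Two small remarks. Your paragraph worrying about upgrading a conservation statement \qt{about $\OVW$} to one \qt{about $\WKL_0 + \OVW$} is moot: the paper's Theorem~\ref{thm:OVW-pi04-conservation} is already stated for $\WKL_0 + \OVW$, so nothing further is needed (though your appeal to Harrington's theorem would indeed be a correct fallback). Your closing paragraph overclaims: from the fact that every $\omega$-model satisfies true arithmetic it does \emph{not} follow that \qt{no $\omega$-model argument can separate $\OVW$ from $\ACA_0$} --- an $\omega$-model separation would witness the failure of $\ACA_0$ by a second-order sentence (the absence of $\emptyset'$ from the model), not a first-order one, and this is exactly what cone avoidance would give. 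The paper explicitly leaves cone avoidance for $\OVW$ open (Question in Section~\ref{sect:consequences}); what the abstract says is only that the \emph{known} separation uses non-standard models, not that this is unavoidable in principle.
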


Usually, separations from~$\ACA_0$ are done using the so-called \emph{cone avoidance} property, that is, by proving that for every non-computable set~$A$, every computable instance of the problem admits a solution which does not compute~$A$. In this article, we take a different approach, and prove the separation through a partial conservation result.

\subsection{First-order parts and partial conservation}

A good way to get a better grasp on the reverse mathematical strength of a second-order theorem is to understand its \emph{first-order part}, that is, the theory of its first-order consequences. For example, the first-order part of $\ACA_0$ is Peano arithmetic, while the first-order parts of~$\RCA_0$ and $\WKL_0$ both correspond to~$\Sigma_1$-PA, the restriction of PA to $\Sigma_1$-induction.
There exist two main families of first-order systems which are good benchmarks of the first-order strength of a theorem: induction and collection principles.

\begin{definition}Let $\Gamma$ be a class of formulas (e.g. $\Sigma^0_n, \Pi^0_n$)
\begin{itemize}
    \item The \emph{induction} scheme $\mathsf{I}\Gamma$ is defined for every formula $\varphi \in \Gamma$ by:
    $$
    \varphi(0) \wedge \forall x(\varphi(x) \to \varphi(x+1)) \rightarrow \forall x \varphi(x)
    $$
    \item The \emph{collection} scheme $\mathsf{B}\Gamma$ is defined for every formula $\varphi \in \Gamma$:
    $$
    \forall a [(\forall x < a)(\exists y)\varphi(x, y) \rightarrow (\exists b)(\forall x < a)(\exists y < b)\varphi(x, y) ]
    $$
\end{itemize}
\end{definition}

These schemes form a strictly increasing hierarchy as follows: 
$$
\ISig_1 < \BSig_2 < \ISig_2 < \BSig_3 < \dots
$$
The collection scheme can be thought of as an induction scheme: $\BSig_n$ is equivalent over~$\RCA_0$ to the induction scheme for $\Delta^0_n$ predicates~\cite{slaman2004bounding}. The collection scheme for $\Sigma^0_2$ is of particular interest, and admits many characterizations. In combinatorics, $\BSig_2$ is equivalent to $\RT^1$, the infinite pigeonhole principle~\cite{cholak2001strength}. The tree theorem for singletons is strictly in between~$\ISig_2$ and $\BSig_2$ over~$\RCA_0$ (see~\cite{corduan2010reverse,chong2020strength}). It is unknown whether $\TT^1$ is $\Pi^1_1$-conservative over~$\RCA_0 + \BSig_2$. 

Chong, Wang and Yang~\cite{chong2023conservation} obtained a partial conservation result by proving that $\TT^1$ is $\forall \Pi^0_3$-conservative over~$\RCA_0$. Here, a $\forall \Pi^0_n$ formula consists in a universal set quantification followed by a $\Pi^0_n$ formula.
We prove the following partial conservation theorem for the Ordered Variable Word theorem, which strengthens Chong, Wang and Yang's result since~$\TT^1$ follows from~$\OVW$:

\begin{theorem}\label[theorem]{thm:OVW-pi04-conservation}
$\WKL_0 + \OVW$ is $\forall \Pi^0_4$-conservative over $\RCA_0 + \BSig_2$.      
\end{theorem}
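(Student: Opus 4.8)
The plan is to argue by contraposition and reduce the statement to a model construction. Suppose $\WKL_0 + \OVW \vdash \sigma$ for a $\forall \Pi^0_4$ sentence $\sigma \equiv \forall X\,\forall x_1\,\exists x_2\,\forall x_3\,\exists x_4\,\varphi_0(X,x_1,x_2,x_3,x_4)$ with $\varphi_0$ bounded; the goal is $\RCA_0 + \BSig_2 \vdash \sigma$. If this fails, fix a countable $(M,S) \models \RCA_0 + \BSig_2 + \neg\sigma$ and pick $A \in S$ together with $b \in M$ such that $(M,S) \models \Theta(A)$, where $\Theta(X) :\equiv \forall x_2\,\exists x_3\,\forall x_4\,\neg\varphi_0(X,b,x_2,x_3,x_4)$ is $\Pi^0_3$ in the parameter $X$. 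It then suffices to produce a countable model $(\widehat M,\widehat S) \models \WKL_0 + \OVW$ with a set $\widehat A \in \widehat S$ and an element $\widehat b \in \widehat M$ such that $(\widehat M,\widehat S) \models \Theta(\widehat A)$ (with $\widehat b$ in place of $b$); this contradicts $\WKL_0 + \OVW \vdash \sigma$.

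I would build $(\widehat M,\widehat S)$ by an $\omega$-length forcing iteration whose steps preserve $\BSig_2$ and keep the distinguished $\Pi^0_3$ fact $\Theta(A)$ true, and which may reshape the first-order part along the way — so that $\widehat M$ is related to $M$ only $\Pi^0_3$-elementarily in the parameter $A$, which is all that $\Theta$ needs in order to transfer. A bookkeeping, available since $(M,S)$ is countable, enumerates along the iteration all instances that will appear in $\widehat S$, of two kinds. A $\WKL$-step adjoins a path through a given infinite subtree of $2^{<\widehat M}$ and is handled by a formalized low-basis argument. An $\OVW$-step adjoins a solution $c_0,w_1,w_2,\dots$ to a given instance $(\mathcal A,f)$ by forcing with a variable-word analogue of Mathias forcing patterned on Carlson and Simpson's proof: a condition is a pair $(\bar w,L)$ with $\bar w=(c_0,w_1,\dots,w_n)$ a finite approximation and $L$ an infinite, fast-growing reservoir of left-variable words over $\mathcal A$, ordered so that a refinement concatenates onto $\bar w$ words amalgamated out of $L$ by a Hales--Jewett-type argument and shrinks $L$; meeting the relevant dense sets forces the limit sequence to be a genuine $\OVW$-solution that is $f$-monochromatic. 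Since $\OVW \vdash \TT^1 \vdash \BSig_2$, the base theory cannot be weakened below $\RCA_0 + \BSig_2$, and concretely the burden in the iteration is exactly to maintain $\BSig_2$ (hence $\ISig_1$, hence $\RCA_0$) at every stage and at the limit, since this is both what lets the next forcing step proceed and what the final model needs. (Formally this can be packaged through a general metatheorem in the style of the forcing-based conservation results in the literature, reducing everything to two preservation properties of the $\OVW$-forcing, as below.)

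The two technical cores are preservation lemmas for this forcing (and, trivially, for the $\WKL$-forcing). First, a $\BSig_2$-preservation lemma: the $\OVW$-forcing must be tame enough that $\BSig_2$ survives at the limit, the danger being a solution $G$ encoding, through a $\Sigma^0_2(G)$ formula, an $M$-coded but $M$-unbounded search; excluding this requires keeping the reservoirs inside $\widehat S$, carrying out the variable-word Hales--Jewett/Carlson--Simpson combinatorics that generates the dense sets using only $\BSig_2$-much induction, and showing that the forcing relation for $\Sigma^0_2$ formulas has bounded arithmetic complexity over the ground model. Second, a $\Pi^0_3$-preservation lemma: a sufficiently generic filter keeps true every $\Pi^0_3$ fact with parameters in the current second-order part — in particular $\Theta(A)$ — which is a density argument again resting on a bounded-complexity analysis of the forcing relation (now for $\Sigma^0_3$ matrices) and on $\BSig_2$ to handle uniformly the possibly non-standard witnesses $x_2$ appearing in $\Theta$.

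I expect the main obstacle to be establishing these two preservation properties simultaneously over a non-standard ground model: the genericity needed to force $\Theta$ pushes toward complex, fast-growing reservoirs, whereas $\BSig_2$-tameness pushes toward reservoir bookkeeping that stays within $\BSig_2$'s reach, and reconciling the two amounts to executing the Carlson--Simpson/variable-word machinery at low complexity — the real combinatorial payload of the argument. This same tension is what fixes the conservation bound at $\forall \Pi^0_4$ and no higher: one can afford to preserve a $\Pi^0_3$ amount of information about $A$ per step while remaining $\BSig_2$-tame, but not a genuine $\Sigma^0_4$ amount. This is the model-theoretic counterpart of the facts recalled in the introduction — solutions of computable $\OVW$-instances can be found DNC relative to $\emptyset'$ and, more to the point, ``low'' enough over the first jump — and it is why the separation from $\ACA_0$ obtained here is, of necessity, one that passes through non-standard models.
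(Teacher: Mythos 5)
Your proposal shares the paper's high-level reduction (contrapositive, countable model of $\RCA_0 + \BSig_2 + \neg\sigma$, build a model of $\WKL_0 + \OVW$ that keeps the distinguished $\Pi^0_3$ fact $\Theta(A)$ true), but the model-construction you sketch is a genuinely different method from the paper's, and it has a gap that is not a mere technicality to be filled in. The paper does not force: it runs an indicator/semi-regular-cut argument. From $\M = (M,S,c^\M,C^\M)$ it extracts a sparse $\omega^d$-large$(\theta)$ set for a nonstandard $d$ (this is where the entire combinatorial content of Sections~2--4, culminating in the $\omega^{O(4^b)}$-vs-$\omega^b$ bound of \Cref{cor:largeness-OVW}, is used), builds a shrinking chain $X_0 \supseteq X_1 \supseteq \cdots$ of $\omega^{d_i}$-large$(\theta)$ sets whose minima tend to a cut $I \subsetneq M$, and sets $(\widehat M,\widehat S) = (I,\mathsf{Cod}(M/I))$. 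Semi-regularity of $I$ gives $\WKL_0$; the largeness$(\theta,\OVW)$ bound lets one realize every $\OVW$-instance by an $X_j$-subtree whose trace in $I$ is a full solution; and a $\theta$-apartness condition on consecutive $X_j$'s is what pushes the $\Pi^0_3$ fact $\forall x\exists y\forall z\,\theta$ down to the cut. The first-order part genuinely shrinks from $M$ to $I$, and that shrinkage is where all the work is.

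Your plan is a Mathias-style forcing iteration, and here is the problem. Second-order forcing fixes the first-order part $M$ and only enlarges $S$; if $M$, $A$ and $b$ are unchanged, then every arithmetic formula with these parameters has its truth value preserved automatically, so your ``$\Pi^0_3$-preservation lemma'' is vacuous. What is left is exactly one lemma: that the $\OVW$-solution forcing preserves $\BSig_2$ over a nonstandard ground model. But if that lemma held, your iteration would prove full $\Pi^1_1$-conservation of $\WKL_0 + \OVW$ over $\RCA_0 + \BSig_2$, which is \Cref{ques:ovw-bsig2} and is open (even its consequence for $\TT^1$ is open). The obstruction is substantive: $\OVW$ is not computably true and its solutions compute $\mathrm{DNC}$ functions relative to $\emptyset'$, so the forcing you describe would have to control a second-jump phenomenon finely enough to keep $\BSig_2$ over a nonstandard model, and nobody knows how to do this. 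Your closing heuristic --- that one can ``afford to preserve a $\Pi^0_3$ amount of information about $A$ per step but not $\Sigma^0_4$'' --- does not make sense when $M$ is held fixed, since then \emph{all} arithmetic information about $A$ is preserved for free; it only makes sense if the first-order part shrinks. But once you allow that, you are no longer doing forcing in the usual sense, you are taking a cut, and you are back to the paper's argument --- with the quantitative $\omega^n$-largeness$(\theta,\OVW)$ machinery that your sketch omits still constituting the actual content of the proof.
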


Note that Parsons, Friedman and Paris proved that $\BSig_{n+1}$ is $\forall \Pi^0_{n+2}$-conservative over~$\ISig_n$ for every~$n \geq 0$ (see \cite{hajek1998metamathematics,kaye1991models}), but not $\Pi^0_{n+3}$-conservative since $\mathsf{B}\Sigma_{n+1}$ is a $\Pi_{n+3}$ sentence.
Note that this result cannot be strengthened to conservation over~$\RCA_0$ since~$\BSig_2$ is not $\Pi^0_4$-conservative over~$\RCA_0$. Also note that this conservation results yields a separation of~$\OVW$ from~$\ACA_0$, since~$\ACA_0$ proves the consistency of~$\RCA_0$ (see Simpson~\cite[Corollary VIII.1.7]{simpson2009subsystems}).

\subsection{Organization of the paper}

In \Cref{sect:largeness}, we introduce the framework used to prove the partial conservation result. Then, in \Cref{graham-rothschild}, we prove the existence of some largeness bounds for the Graham-Rothschild theorem, which are then used in \Cref{sect:ovw} to prove the existence of largeness bounds for the Ordered Variable Word theorem.  Last, in \Cref{sect:consequences}, we derive some consequences of the proof and state some remaining open questions.

\section{Largeness}\label[section]{sect:largeness}

A collection $\L$ of finite sets is a \emph{notion of largeness} if it is closed under superset, and every infinite set has a finite subset in~$\L$.
Ketonen and Solovay~\cite{ketonen1981rapidly} defined a quantitative notion of largeness, called $\alpha$-largeness, to better understand the unprovability of combinatorial principles such as Ramsey's theorem in some theories. This was later combined by Patey and Yokoyama~\cite{patey2016strength} with the indicator argument techniques of Paris and Kirby~\cite{Kirby1977InitialSO} to prove $\forall \Pi^0_3$-conservation results about Ramsey's theorem for pairs. More recently, Le Houérou, Levy Patey and Yokoyama~\cite{houerou2023conservation} introduced a variant of $\alpha$-largeness to prove $\forall \Pi^0_4$-conservation results about~$\RT^2_2$. 

In what follows, we work in a language enriched with two constant symbols $c$ and $C$ representing a first-order and second-order object respectively. Fix a $\Delta_0^0$ formula $\zeta(X\uh_z,t,x,y,z)$ with exactly the displayed free variables, and let $\theta(x, y, z) = \zeta(C\uh_z,c,x,y,z)$ be a $\Delta^{0,C,c}_0$ formula. Given two sets~$A$ and $B$, we write $A < B$ for $(\forall a \in A)(\forall b \in B)a < b$.

\begin{definition}
Two finite sets~$X < Y$ are \emph{$\theta$-apart} if 
$$\forall x < \max X \exists y < \min Y \forall z < \max Y \theta(x, y, z)$$
\end{definition}

Note that $\theta$-apartness is a transitive relation. Moreover, if $X < Y$ are $\theta$-apart and $X_0 \subseteq X$ and $Y_0 \subseteq Y$, then $X_0, Y_0$ are $\theta$-apart.

\begin{definition}[\cite{houerou2023conservation}]\label[definition]{defi:largeness-rca0-bsig2-t}
A set~$X \finsub \NN$ is 
\begin{itemize}
    \item \emph{$\omega^0$-large$(\theta)$} if $X \neq \emptyset$.
    \item \emph{$\omega^{(n+1)}$-large$(\theta)$} if $X \setminus \min X$ is  $(\omega^n \cdot \min X)$-large$(\theta)$
    \item \emph{$\omega^n \cdot k$-large$(\theta)$} if
there are $k$ pairwise $\theta$-apart $\omega^n$-large$(\theta)$ subsets of~$X$
$$
X_0 < X_1 < \dots < X_{k-1}
$$
\end{itemize}
\end{definition}

Note that if we take $\theta(x, y, z)$ to be the $\top$ formula, then $\omega^n \cdot k$-largeness$(\theta)$ is exactly $\omega^n \cdot k$-largeness. 
The following proposition was proven by Le Houérou, Levy Patey and Yokoyama~\cite{houerou2023conservation}.

\begin{proposition}[\cite{houerou2023conservation}]\label[proposition]{prop:largeness-bsig2-largeness}
For every~$n \in \omega$,
$\RCA_0 + \BSig_2 + \forall x \exists y \forall z \theta(x, y, z)$ proves that for every~$b \geq 1$, $\omega^n \cdot b$-largeness$(\theta)$ is a largeness notion.
\end{proposition}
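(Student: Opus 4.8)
**Proof proposal for Proposition (largeness notion under $\mathsf{RCA}_0 + \mathsf{B}\Sigma_2 + \forall x \exists y \forall z\, \theta$):**

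The plan is to prove the two closure properties separately. Closure under superset is immediate from the definition by a straightforward induction on the ordinal notation $\omega^n \cdot b$: if $X$ is $\omega^n \cdot b$-large$(\theta)$ and $X \subseteq X'$, then the witnessing subsets $X_0 < \dots < X_{b-1}$ of $X$ are also subsets of $X'$, so we only need the $\omega^n$-largeness case, which reduces (via the recursive clause $X \setminus \min X$ being $(\omega^{n-1}\cdot \min X)$-large$(\theta)$) to a smaller ordinal; note that passing to a superset $X'$ may change $\min X'$, so one must be slightly careful and argue that taking the subset of $X'$ equal to $X$ itself already works, making superset-closure essentially trivial once stated correctly. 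This part should go through by $\Sigma^0_1$-induction (available in $\mathsf{RCA}_0$) on $n$, with an inner argument on $b$.

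The substantive part is showing that every infinite set $Y$ has a finite subset that is $\omega^n \cdot b$-large$(\theta)$. First I would reduce to the case $b = 1$: given an infinite $Y$, using $\forall x \exists y \forall z\, \theta(x,y,z)$ one extracts an infinite sequence of finite blocks that are pairwise $\theta$-apart — concretely, split $Y$ into consecutive finite intervals $I_0 < I_1 < \dots$ and, for each $i$, thin $I_i$ so that $I_i < I_{i+1}$ becomes $\theta$-apart by choosing, for each $x < \max I_i$, a witness $y$ and then ensuring $\min I_{i+1} > y$ and the blocks are short enough; here $\mathsf{B}\Sigma_2$ is exactly what lets us bound the finitely many witnesses $y$ for all $x < \max I_i$ simultaneously. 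If each block $I_i$ can itself be taken $\omega^n$-large$(\theta)$, then concatenating $b$ of them gives $\omega^n \cdot b$-largeness$(\theta)$. So everything comes down to: every infinite set has a finite $\omega^n$-large$(\theta)$ subset.

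For the $\omega^n$-large$(\theta)$ claim I would induct on $n$. The base case $n = 0$ is trivial ($\omega^0$-large$(\theta)$ just means nonempty). For the step, given infinite $Y$, let $m = \min Y$; I need $Y \setminus \{m\}$ (or rather an initial-segment-chosen version) to have a finite subset that is $\omega^{n-1} \cdot m$-large$(\theta)$, which by the $b = m$ case of the reduction above follows from the inductive hypothesis for $\omega^{n-1}$-largeness$(\theta)$, applied $m$ times to disjoint tails of $Y$. The bookkeeping — that $\mathsf{B}\Sigma_2$ suffices to carry out these nested finite extractions and that the relevant "there exists a large finite subset" statement stays within the formula complexity for which we have induction — is the delicate point, and is presumably the reason this is quoted as a known lemma from~\cite{houerou2023conservation} rather than reproven here.

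The main obstacle I anticipate is precisely the induction-strength bookkeeping: naively, "every infinite set has an $\omega^n$-large$(\theta)$ finite subset" is a $\Pi^0_3$-type assertion (for fixed $n$) because $\theta$-apartness is $\Pi^0_2$ and largeness nests quantifiers, so one cannot simply induct on $n$ inside $\mathsf{RCA}_0$ without care. The hypothesis $\forall x \exists y \forall z\, \theta(x,y,z)$ is what collapses the relevant complexity — it guarantees apartness is always achievable — and $\mathsf{B}\Sigma_2$ (equivalently $\mathsf{I}\Delta^0_2$, equivalently $\mathsf{RT}^1$) is what makes the finite search for witnesses uniformly bounded. Getting the quantifier complexity of the induction hypothesis low enough to be justified, while threading the $\theta$-apartness witnesses through the recursion, is where the real work lies; the combinatorics of concatenating large blocks is routine by comparison.
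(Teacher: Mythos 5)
The paper does not actually prove this proposition: it is quoted verbatim from Le Hou\'erou, Levy Patey and Yokoyama~\cite{houerou2023conservation} and used as a black box, so there is no in-paper proof to compare against. With that caveat, your reconstruction is sound and is almost certainly the intended argument, given that this is the standard Ketonen--Solovay-style largeness proof transported to the relativized $\theta$-apart setting.

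Two points are worth sharpening. First, on complexity: for a \emph{finite} set $X$ (coded as a number), the predicate ``$X$ is $\omega^n\cdot b$-large$(\theta)$'' is actually $\Delta^0_0$ in $C, c$, since $\theta$-apartness of two finite sets is a bounded formula and the recursive unwinding of the definition only introduces bounded quantifiers. Hence ``there is a finite $\omega^n\cdot b$-large$(\theta)$ subset of $Y$ above $k$'' is $\Sigma^0_1$, and the internal induction on $b$ (building a sequence $X_0 < \dots < X_{b-1}$ of pairwise $\theta$-apart blocks) is plain $\Sigma^0_1$-induction, available in $\RCA_0$. You were right to worry about quantifier complexity but the situation is better than ``$\Pi^0_3$-type'': the only place where strength beyond $\RCA_0$ enters is exactly where you put it, namely bounding the $\Pi^0_1$-witnesses $y_x$ for $x < \max X_{j-1}$, which is an instance of $\mathsf{B}\Pi^0_1$ and hence of $\BSig_2$. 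Second, your phrasing ``first reduce to $b=1$'' is slightly misleading since the two parts are mutually recursive across levels: the step from $\omega^n$ to $\omega^{n+1}$ (the $b=1$ case at level $n+1$) requires the full $\omega^n\cdot m$ statement for an $m$ depending on the set, and the general-$b$ statement at level $n+1$ is then obtained by concatenation. You do acknowledge this, but the induction is better organized as: external induction on $n\in\omega$ proving, at each level, the full statement ``for every $b\geq 1$, $\omega^n\cdot b$-large$(\theta)$ is a largeness notion,'' with the internal $b$-quantifier handled inside each step. With that reorganization, the proof goes through.
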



\begin{remark}
Many combinatorial proofs about largeness make some assumptions on $\min X$ in order to avoid some degenerate cases.
For example, Ketonen and Solovay~\cite{ketonen1981rapidly} and Ko{\l}odziejczyk and Yokoyama~\cite{kolo2020some} assumed that $\min X \geq 3$.
In this article, because of \Cref{lem:split-largeness} and \Cref{lem:large-implies-sparse}, we will always assume that $\min X \geq 4$. As in Le Houérou, Levy Patey and Yokoyama~\cite{houerou2023conservation} we will also require that $\min X \geq c$, for technical reasons which will become clear in the proof of \Cref{thm:OVW-pi04-conservation}.
\end{remark}

In the remainder of this section, we prove some basic combinatorial lemmas about $\omega^n$-largeness$(\theta)$ which will be used throughout this article. These lemmas could be considered as folklore, in the sense that their $\alpha$-largeness counterpart are well-known, and their adaptation to $\omega^n$-largeness$(\theta)$ is almost straightforward, except maybe \Cref{lem:split-largeness}. The proof of the next lemma is very similar to \cite[Lemma 2.9]{houerou2023conservation}

\begin{lemma}\label[lemma]{lem:split-largeness}
$\ISig_1$ proves that for every $a,b$ and for every~$\omega^{a+2b+1}$-large$(\theta)$ set~$X$, then there are some $k \in \NN$ and some $\omega^a$-large$(\theta)$ pairwise $\theta$-apart subsets $X_0 < \dots < X_{k-1}$ of~$X$ such that every $H \in \prod_{i < k} X_i$ is $\omega^b$-large$(\theta)$ (here, we abuse the product notation and see $H$ as a subset of $\NN$ intersecting every $X_i$ exactly once rather than a tuple in the product $X_0 \times \dots \times X_{k-1}$).

\end{lemma}

\begin{proof}
By $\Pi^0_1$-induction over $b$, we prove the following statement that directly imply the lemma (since $\min X \geq 2$): for all $a$, for all $\theta$-apart pairs $Y_0 < Y_1$ of $\omega^{a+2b}$-large$(\theta)$ sets, there exists some $k < \max Y_1$ and some $\omega^a$-large$(\theta)$ subsets $X_1 < \dots < X_{k-1}$ of $Y_1$ such that, letting $X_0 = Y_0$, every $H \in \prod_{i < k} X_i$ is $\omega^b$-large$(\theta)$ and $X_0, \dots, X_{k-1}$ are pairwise $\theta$-apart. This is indeed a $\Pi_1^0$ formula, as being $\theta$-appart, or being $\omega^{a}$-large$(\theta)$ are $\Delta_0^0$ statements.

Case $b = 0$. The result is clear, as every non-empty set is $\omega^0$-large$(\theta)$

Case $b > 0$. Let $Y_0 < Y_1$ be $\omega^{a+2b}$-large$(\theta)$ and $\theta$-apart, let $Z_0 < \dots < Z_{\min Y_1 - 1}$ be $\omega^{a+2b-1}$-large$(\theta)$ pairwise $\theta$-apart subsets of $Y_1$. For every $i < \min Y_1$, let $Z_i^0, Z_i^1$ be $\omega^{a+2b-2}$-large$(\theta)$ and $\theta$-apart subsets of $Z_i$. 

We can then apply the inductive hypothesis on the pairs $$(Z_0^0,Z_0^1), (Z_1^0,Z_1^1) \dots, (Z_{\max Y_0 - 1}^0, Z_{\max Y_0 - 1}^1)$$
to get for every $j < \max Y_0$, families of pairwise $\theta$-apart $\omega^{a}$-large$(\theta)$ subsets $Z_{j}^0 = X_{j,0} < \dots < X_{j,k_j}$ of $Z_{j}^0 \cup Z_{j}^1$ such that every $H \in \prod_{i \leq k_j} X_{j,i}$ is $\omega^{b-1}$-large$(\theta)$.

Consider the family, $Y_0 < X_{0,0} < X_{0,1} < \dots < X_{0,k_0} < X_{1,0} < X_{1,1} < \dots < X_{\max Y_0 - 1, k_{\max Y_0 - 1}}$. Every block of this family is $\omega^a$-large$(\theta)$. Since $X_{0,0} \subseteq Y_1$, then $Y_0$ and $X_{0,0}$ are $\theta$-apart. Moreover, for all $j < \max Y_0 - 1$, since $X_{j,k_j} \subseteq Z_{j}$ and $X_{j+1,0} = Z_{j+1}$, $X_{j,k_j}$ and $X_{j+1,0}$ are $\theta$-apart.

Let $H \in Y_0 \times X_{0,0} \times \dots \times X_{0,k_0} \times X_{1,0} \times \dots \times X_{\max Y_0 - 1, k_{\max Y_0 - 1}}$. Every element of $\prod_{i < k_j} X_{j,i}$ is $\omega^{b-1}$-large$(\theta)$, therefore $H$ is $\omega^{b}$-large$(\theta)$.

This completes the proof.
\end{proof}

\begin{remark}\label[remark]{rem:lower-bound-blocks}
The bound obtained in \Cref{lem:split-largeness} is tight in the sense that it is not possible to get rid of the $2m$ in the exponent: there exists a formula $\theta$ and a set $X$ that is $\omega^{2m-1}$-large$(\theta)$ such that no family $X_0 < \dots < X_{k-1}$ of $\omega$-large$(\theta)$ and pairwise $\theta$-apart subsets of $X$ satisfy that every $H \in \prod_{i < k} X_i$ is $\omega^m$-large$(\theta)$.

To see this, we can use the construction in \cite{houerou2023conservation} of a formula $\theta$ and of an $\omega^{2m-1}$-large$(\theta)$ set $X$ with a coloring $f : X \to 2$ such that no $f$-homogeneous $\omega^m$-large$(\theta)$ subset exists. The coloring $f$ has the property that for any $\omega$-large$(\theta)$ subset $Y$ of $X$, there exists some $y_0,y_1 \in [\min Y, \max Y] \cap X$ such that $f(y_0) = 0$ and $f(y_1) = 1$.

Assuming there exists a family $X_0 < \dots < X_{k-1}$ of $\omega$-large$(\theta)$ subsets of $X$ such that every $H \in \prod_{i < k} X_i$ is $\omega^m$-large$(\theta)$, we get that every $H \in \prod_{i < k} [\min X_i, \max X_i] \cap X$ is $\omega^m$-large$(\theta)$ ($\theta$ has the property that the $\theta$-apartness of two sets $Y_0$ and $Y_1$ depends only on $\max Y_0$, $\min Y_1$ and $\max Y_1$) and therefore, by taking in each set $[\min X_i, \max X_i] \cap X$ an element $h_i$ such that $f(h_i) = 0$, we obtain a set $H$ that is $f$-homogeneous and $\omega^m$-large$(\theta)$, contradicting the properties of $X$. \\
\end{remark}

\begin{remark}
Contrary to \Cref{rem:lower-bound-blocks}, when considering the classical notion of largeness, it is possible to show that for every $\omega^{n+m+1}$-large set $X$, there exists $\omega^{n}$-large subsets $X_0 < \dots < X_{k-1}$ such that every $H \in \prod_{i < k} X_i$ is $\omega^m$-large. Propagating this difference between the bound obtained for largeness and for largeness$(T)$, the bound to obtain an $\omega^n$-large$(\OVW)$ set is linear, while the bound to obtain an $\omega^n$-large$(\theta, \OVW)$ set is exponential. 
\end{remark}

\begin{lemma}[Folklore, see Ketonen and Solovay~\cite{ketonen1981rapidly}]\label[lemma]{lem:fast-growing-size}
For every primitive recursive function $f$, there exists some $n \in \omega$ such that every $\omega^n\mbox{-large}$ (and a fortiori $\omega^n\mbox{-large}(\theta)$) set $X$ satisfies $|X| > f(\min X - 1)$.
\end{lemma}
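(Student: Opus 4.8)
\emph{Reduction to the case $\theta=\top$.} The parenthetical ``a fortiori'' is immediate. By induction on $n$ one checks that $\omega^n$-large$(\theta)$ implies $\omega^n$-large: $\theta$-apartness is merely an extra constraint on the subsets $X_0 < \dots < X_{k-1}$ witnessing $\omega^n\cdot k$-largeness$(\theta)$, so forgetting it leaves a witness of plain $\omega^n\cdot k$-largeness, while the clauses for $\omega^0$ and $\omega^{n+1}$ carry this through. Hence it suffices to bound $|X|$ from below for plain $\omega^n$-large sets.

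\emph{An explicit size bound.} Unwinding \Cref{defi:largeness-rca0-bsig2-t} with $\theta=\top$, I would introduce a sequence of nondecreasing functions $g_n\colon\NN\to\NN$ mirroring the nesting of blocks: put $g_0\equiv 1$ and, writing $\psi_n(x)=x+g_n(x)$, put $g_{n+1}(m)=g_n\bigl(\psi_n^{(m-1)}(m+1)\bigr)$, where $\psi_n^{(i)}$ denotes the $i$-fold composition. Each fixed $g_n$ is primitive recursive, and the claim is that $\ISig_1$ proves, for every standard $n$, that every $\omega^n$-large set $X$ with $\min X\geq 2$ (which holds under the standing assumption of the paper) satisfies $|X|\geq g_n(\min X)$. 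The case $n=0$ is the definition of $\omega^0$-largeness. For the step, if $X$ is $\omega^{n+1}$-large then $X\setminus\min X$ is $\omega^n\cdot\min X$-large, hence contains a chain $X_0<\dots<X_{\min X-1}$ of $\omega^n$-large subsets; using the inductive bounds $|X_i|\geq g_n(\min X_i)$ together with $\min X_{i+1}>\max X_i\geq\min X_i+|X_i|-1$, a finite induction provable in $\ISig_1$ gives $\min X_i\geq\psi_n^{(i)}(\min X+1)$, and therefore $|X|\geq|X_{\min X-1}|\geq g_n\bigl(\psi_n^{(\min X-1)}(\min X+1)\bigr)=g_{n+1}(\min X)$.

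\emph{Diagonalisation against primitive recursion.} It remains to observe that the recursion above already forces the Ackermann-type bound $g_{n+1}(m)\geq g_n^{(m)}(m)$, so the $g_n$ climb the finite levels of the fast-growing hierarchy and the diagonal $m\mapsto g_m(m)$ is of Ackermann type; consequently every primitive recursive function is eventually dominated by some fixed $g_n$. This is exactly the calibration exploited by Ketonen and Solovay. Given a primitive recursive $f$, apply this to the primitive recursive function $m\mapsto f(m-1)+1$ to obtain $n_0$ and a threshold $m_0$ with $g_{n_0}(m)>f(m-1)$ for all $m\geq m_0$; for each of the finitely many $m<m_0$ there is some $n_m$ with $g_{n_m}(m)>f(m-1)$, since $g_n(m)\to\infty$ as $n\to\infty$, and, because the $g_n$ are pointwise nondecreasing in $n$, the maximum $n$ of these finitely many indices works for every admissible $\min X$. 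Then any $\omega^n$-large set $X$ has $|X|\geq g_n(\min X)>f(\min X-1)$, and a fortiori so does any $\omega^n$-large$(\theta)$ set. The only substantive ingredient here is the domination of all primitive recursive functions by some finite-level $g_n$, which is a standard property of the fast-growing/Grzegorczyk hierarchy; in particular no sharp estimate on the minimal size of an $\omega^n$-large set is needed, so the coarse bound derived above suffices.
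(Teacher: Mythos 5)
Your overall strategy is sound — reduce to $\theta = \top$, extract an explicit lower bound $g_n(\min X)$ on $|X|$ by unrolling the definition, and then dominate every primitive recursive function by some level $g_n$ — and since the paper states this lemma as folklore with no internal proof, the only thing to compare against is the Ketonen--Solovay reference, which proceeds by exactly this kind of hierarchy calibration. However, the specific recursion you write down degenerates, so the bounds it produces are vacuous and the argument does not go through as written.

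Concretely, with $g_0 \equiv 1$ one has $\psi_0(x) = x + 1$, hence $g_1(m) = g_0\bigl(\psi_0^{(m-1)}(m+1)\bigr) = g_0(2m) = 1$, and an immediate induction shows $g_n \equiv 1$ for every $n$. Your claimed inequality $g_{n+1}(m) \geq g_n^{(m)}(m)$ is then only the tautology $1 \geq 1$; the ``Ackermann-type'' growth never materializes, and the final diagonalization has nothing to diagonalize against. The culprit is the lossy step $|X| \geq |X_{\min X - 1}|$, which discards all but one of the $\min X$ disjoint blocks. This coarsening is harmless once $g_n$ is already strictly increasing (since the argument $\psi_n^{(\min X - 1)}(\min X + 1)$ fed into $g_n$ is then huge), but it is fatal at the base, where $g_n \equiv 1$ forces $\psi_n(x) = x+1$ and hence no growth at all.

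The fix is minor. Either keep the additive bound $|X| \geq 1 + \sum_{i < \min X}|X_i| \geq 1 + \sum_{i < \min X}g_n\bigl(\psi_n^{(i)}(\min X + 1)\bigr)$ and define $g_{n+1}(m) = 1 + \sum_{i < m}g_n\bigl(\psi_n^{(i)}(m+1)\bigr)$; this is non-degenerate, gives $g_1(m) = m+1$ and $g_2$ already exponential, and the $g_n$ then do climb the Grzegorczyk hierarchy as you intend. Or, more minimally, observe directly that any $\omega^1$-large set satisfies $|X| \geq \min X + 1$ (the $\min X$ disjoint blocks inside $X \setminus \{\min X\}$ are each nonempty), hard-code $g_1(m) = m + 1$, and run your original lossy recursion from $n = 1$; once $g_1$ is strictly increasing, your argument works verbatim. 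The rest of your write-up — the reduction to $\theta = \top$, the monotonicity bookkeeping, the estimate $\min X_i \geq \psi_n^{(i)}(\min X + 1)$, and the handling of the finitely many small values of $\min X$ — is correct.
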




Ko{\l}odziejczyk and Yokoyama~\cite{kolo2020some} defined two notions of sparsity, called \emph{exp-sparsity} and \emph{$\alpha$-sparsity}, respectively, and proved that with a constant overhead on the bounds of largeness, one could always assume that the set is sufficiently sparse. We define a similar notion of sparsity, stronger than exp-sparsity, and prove the corresponding lemma.

\begin{definition}[Sparsity]
A set $X$ is said to be \emph{sparse} if for every $x,y \in X$ with $x < y$ we have $x^{x^x} < y$.
\end{definition}

It is clear that if $\RCA_0$ proves that some $\Gamma$ is a largeness notion, then $\RCA_0$ proves that being $\Gamma$-large and sparse is again a largeness notion. Indeed, given any infinite set~$X$, $\RCA_0$ proves the existence of an infinite sparse subset~$Y \subseteq X$. By largeness of~$\Gamma$, there is a finite $\Gamma$-large subset~$Z \subseteq Y$, which is both $\Gamma$-large and sparse. 

\begin{lemma}\label[lemma]{lem:large-implies-sparse}
If $X$ is $\omega^{2n+7}\mbox{-large}(\theta)$, then there is a subset $Y \subseteq X$ that is $\omega^n\mbox{-large}(\theta)$ and sparse.    
\end{lemma}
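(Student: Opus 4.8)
The plan is to reduce $\omega^n$-largeness$(\theta)$ plus sparsity to a plain $\omega^{2n+7}$-largeness$(\theta)$ hypothesis by spending a constant number of extra exponents to force the elements of the target set apart. The guiding idea is that sparsity is a condition that only depends on the \emph{gaps} between consecutive elements of a set, so if we can repeatedly split a large set into many blocks and then pick one point per block, while ensuring that successive chosen points are far apart, we will obtain a sparse set; the cost of "forcing far apart" is controlled by \Cref{lem:fast-growing-size}, which tells us that an $\omega^k$-large set is as big as we want as a function of its minimum, hence its blocks can be spread out.

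Concretely, I would first invoke \Cref{lem:split-largeness} with suitable parameters: starting from an $\omega^{2n+7}$-large$(\theta)$ set $X$, write $2n+7 = n + 2\cdot 3 + 1$, so that \Cref{lem:split-largeness} (with $a = n$, $b = 3$) yields $\omega^n$-large$(\theta)$, pairwise $\theta$-apart subsets $X_0 < \dots < X_{k-1}$ of $X$ such that every transversal $H \in \prod_{i<k} X_i$ is $\omega^3$-large$(\theta)$. The point of keeping $\omega^3$-largeness on the transversals is that $\omega^3$-large sets are guaranteed (by \Cref{lem:fast-growing-size} applied to a fast enough primitive recursive function, and since $\min X \geq 4$) to be long enough that, within the interval spanned by any such transversal, one can choose points whose successive ratios beat the $x \mapsto x^{x^x}$ bound. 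Then I would extract from each $X_i$ (or rather from a fixed transversal built greedily) a single element $y_i$, choosing $y_i$ inside $X_i$ far enough beyond $y_{i-1}$ — the $\omega^3$-largeness of the transversal through the already-chosen points is exactly what licenses such a choice — so that $Y = \{y_0, \dots, y_{k-1}\}$ is sparse. Finally, $Y$ is a subset of $X$ that is $\omega^n$-large$(\theta)$: largeness$(\theta)$ of the family $X_0 < \dots < X_{k-1}$ together with the fact that $Y$ picks... wait, here one must be slightly careful, since picking one point per block does not preserve $\omega^n$-largeness in general; the correct bookkeeping is to run the sparsification \emph{inside} each block, i.e. apply the construction to make each $X_i$ itself sparse, and separately ensure inter-block sparsity via the extra exponents, so that the union of the sparsified blocks is $\omega^n$-large$(\theta)$ by \Cref{defi:largeness-rca0-bsig2-t} and sparse.

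So the cleaner route, which I would actually carry out, is: from $\omega^{2n+7}$-large$(\theta)$ obtain via \Cref{lem:split-largeness} blocks $X_i$ that are $\omega^{n+?}$-large$(\theta)$ with transversals $\omega^{c}$-large$(\theta)$ for a constant $c$; use the transversal largeness, via \Cref{lem:fast-growing-size}, to thin each block into a sparse $\omega^n$-large$(\theta)$ piece whose minimum dominates $(\max\text{ of the previous piece})^{(\cdot)^{(\cdot)}}$ (possible because the blocks were already spread out by the apartness/splitting and because a sufficiently large block contains elements with arbitrarily large gaps); the resulting union $Y$ is sparse both within blocks and across blocks, and is $\omega^n$-large$(\theta)$ because it is a concatenation of $\min X$-many pairwise $\theta$-apart $\omega^{n-1}$-large$(\theta)$ pieces (unfolding the definition once), which is $\omega^n$-large$(\theta)$ by definition. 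I expect the main obstacle to be the arithmetic bookkeeping of exactly how many exponents are consumed — verifying that $2n+7$ suffices (a constant of $7$, coming from $2\cdot 3 + 1$ in \Cref{lem:split-largeness} plus a possible $+1$ from unwinding the definition of $\omega^{n+1}$-largeness and the $\geq 4$ convention) — and making sure the greedy choice of a sparse transversal can be done \emph{uniformly} so that the whole argument goes through in $\ISig_1$ (or the ambient weak theory) rather than requiring stronger induction; the fast-growing-function input to \Cref{lem:fast-growing-size} must be fixed in advance, independent of $n$, which it can be since $x \mapsto x^{x^x}$ is a single primitive recursive function.
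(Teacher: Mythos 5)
Your instinct to use \Cref{lem:split-largeness} plus \Cref{lem:fast-growing-size} and to take a transversal is right, but you have the parameters of \Cref{lem:split-largeness} backwards, and this is precisely what creates the gap you then notice mid-proof and try to patch with a more convoluted route.

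You set $a = n$, $b = 3$, producing blocks $X_i$ that are $\omega^n$-large$(\theta)$ with transversals only $\omega^3$-large$(\theta)$. Then, as you correctly observe, a single point per block gives you only $\omega^3$-largeness, which is useless, so you retreat to the "cleaner route" of thinning each $\omega^n$-large$(\theta)$ block into a sparse $\omega^n$-large$(\theta)$ subblock and taking the union. But that thinning step is exactly the lemma you are trying to prove (applied at level $n$ to each block), so the argument is circular as written; making it non-circular would require an induction with much worse bookkeeping. Also, the identity you use, $2n+7 = n + 2\cdot 3 + 1$, is false for $n > 0$; that decomposition of the exponent gives $n+7$.

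The paper does the opposite: apply \Cref{lem:split-largeness} with $a = 3$ and $b = n$, getting $\omega^3$-large$(\theta)$ pairwise $\theta$-apart blocks $X_0 < \dots < X_{k-1}$ such that \emph{every transversal} is $\omega^n$-large$(\theta)$. Then no clever greedy choice, no thinning, and no extra fast-growing-function invocation to "spread the points apart" is needed: simply take $Y = \{\max X_i : i < k\}$. This $Y$ is a transversal, hence $\omega^n$-large$(\theta)$ for free. And $Y$ is sparse automatically, because for consecutive $x = \max X_i < y = \max X_{i+1}$, the interval $(x, y]$ contains the $\omega^3$-large$(\theta)$ block $X_{i+1}$; a one-line computation (using $\min X \geq 4$) shows an $\omega^3$-large interval $(x,y]$ satisfies $y > x^{x^x}$. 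The blocks themselves supply all the separation you need. So the lesson is: you want $\omega^n$-largeness on the transversals (the thing you keep) and only a small fixed $\omega^3$-largeness on the blocks (the thing you throw away, whose only job is to force the gap), not the other way round.
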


\begin{proof}
By \Cref{lem:split-largeness}, there exists $X_0 < \dots < X_{k-1}$ $\omega^3\mbox{-large}(\theta)$ subsets of $X$ such that $\{\max X_i : i < k\}$ is $\omega^n\mbox{-large}(\theta)$.

This set is also sparse: by a simple computation, if $(x,y]$ is $\omega^3\mbox{-large}(\theta)$ then $y > x^{x^x}$ (using the assumption that $\min X \geq 4$).   
\end{proof}

\subsection{Largeness and variable words}

Largeness is defined in terms of sets of integers, while the Ordered Variable Word theorem is a statement about words and variable words. We bridge the two notions by defining an ordered $X$-variable word over a finite set~$X \subseteq \NN$.

\begin{definition}[Ordered $Y$-variable word]
For $Y =\{y_0, \dots, y_{n-1}\}$ a finite set, an \emph{ordered $Y$-variable word over $A$} is a finite word $w$ over the alphabet $A \sqcup \{x_j : j < n\}$ where for every $j < n$, the first occurrence of $x_j$ is at position $y_j$ and for every $j < n-1$ its last occurrence is before position $y_{j+1}$.    
\end{definition}

\begin{definition}[Substitution]
For $w$ a $Y$-variable word over an alphabet $A$ and $u \in A^{\leq |Y|}$, the substitution $w[u]$ is defined as the word $w$ where every occurrence of $x_i$ for $i < |u|$ is replaced by $u(i)$ and cut just before the first occurrence of $x_{|u|}$.
\end{definition}

\begin{example}
On the alphabet $A = \{a,b\}$, $w = abx_0ax_0bx_1bb$ is a $\{2,6\}$-variable word, $ax_0bx_1x_0ab$ is not a variable word, since there is an occurrence of $x_0$ after an occurrence of $x_1$ and $\ aax_1b$ is not a variable word either since there is no occurrence of $x_0$. 
$w[\epsilon] = ab$ (where $\epsilon$ is the empty word), $w[a] = abaaab$ and $w[ba] = abbabbabb$.
\end{example}

\section{Graham-Rothschild theorem}\label[section]{graham-rothschild}

The Ordered Variable Word theorem belongs to a whole family of variable words statements, whose pigeonhole principle is the \emph{Hales-Jewett theorem}~\cite{hales1963regularity}. In its simplest form, the Hales-Jewett theorem asserts the existence, for every finite alphabet~$A$ and every finite coloring of~$A^{<\omega}$, of a variable word~$w$ such that $\{ w[a] : a \in A \}$ is monochromatic. Its generalization to multiple dimensions is known as the Graham-Rothschild theorem~\cite{graham2013ramsey}. In this section, we define notions of largeness for these theorems, and obtain explicit quantitative bounds for them. This analysis will be reused in the next section to obtain quantitative bounds for the Ordered Variable Word theorem.

\begin{definition}
For~$Y$ a set, a combinatorial $Y$-space over an alphabet $A$ is a set of the form $S = \{w[u] : u \in A^{|Y|}\}$ for some ordered $Y$-variable word $w$ over~$A$. We call $w$ its \emph{generating variable word}. 
The \emph{dimension} of a combinatorial $Y$-space $T$ over~$A$ is the number of variable kinds of its generating variable words. 
A \emph{combinatorial $Y$-line} is a combinatorial $Y$-space of dimension~1. \\
\end{definition}

\begin{definition}
For $S$ a combinatorial $X$-space, a \emph{combinatorial subspace} $S'$ of $S$ is a combinatorial $Y$-space included in $S$ for some set $Y$. 
\end{definition}

A consequence of the definition is that if $S'$ is a combinatorial $Y$-subspace of a combinatorial $X$-space~$S$, then $Y \subseteq X$ and the generating word $w'$ of $S'$ is equal to $w[u]$ for $w$ the generating word of $S$ and $u$ some $I$-variable word of length $|X|$, where $X = \{ x_0, \dots, x_n \}$ and $Y = \{ x_i : i \in I \}$.

The original proof of the following theorem by Hales and Jewett~\cite{hales1963regularity} had an extremely fast-growing bound. A more recent proof by Shelah~\cite{shelah1988primitive} uses only $\Sigma^0_1-$-induction, and yields a primitive recursive bound, hence is provable in $\RCA_0$.


\begin{theorem}[Hales-Jewett~\cite{hales1963regularity}, Shelah~\cite{shelah1988primitive}, $\RCA_0$]\label[theorem]{thm:hales-jewett}
There exists a primitive recursive function~$HJ(k, \ell)$ such that
for every set~$X$ with $|X| \geq HJ(k, \ell)$, every combinatorial $X$-space~$S$ over an alphabet~$A$ of size~$k$
and every coloring $f : S \to \ell$, there exists some~$x \in X$ and an $f$-homogeneous combinatorial $\{x\}$-subspace of~$S$.
\end{theorem}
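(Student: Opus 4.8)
The plan is to derive the finitary, $\RCA_0$-provable Hales--Jewett statement with explicit largeness-style bound from the ordinary (infinitary or finitary) Hales--Jewett theorem by a standard compactness/overspill-free extraction of an explicit bound, combined with Shelah's primitive recursive bound. Concretely, Shelah's proof~\cite{shelah1988primitive} already produces, by a finite induction on the number of variable kinds, an explicit primitive recursive function giving, for an alphabet of size~$k$ and $\ell$ colors, a dimension~$d = HJ'(k,\ell)$ such that every $\ell$-coloring of a combinatorial space of dimension at least~$d$ admits a monochromatic combinatorial line. All of this is pure finite combinatorics: it manipulates words over $A \sqcup \{x_j\}$, counts occurrences, and performs a product construction, so it is formalizable in~$\RCA_0$ (indeed in $\IDel_0 + \exp$). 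The only thing that needs care is the translation from "dimension" to "a set~$X$ of witnessing positions of size $\geq HJ(k,\ell)$": since a combinatorial $X$-space over~$A$ has dimension at most~$|X|$ (one variable kind per element of~$X$), and since Shelah's argument, when applied inside a given $X$-space, picks out one position of~$X$ for the monochromatic line, one sets $HJ(k,\ell) = HJ'(k,\ell)$ and reads the conclusion off directly.

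First I would recall the precise combinatorial content of Shelah's induction: to obtain a monochromatic line one reduces, via a "cube"/product argument on words, the case of $d$ variable kinds to a composition of $1$-dimensional Hales--Jewett problems over successively larger alphabets (the alphabet grows to $A^{m}$ for controlled~$m$), and the dependence is primitive recursive in~$k$ and $\ell$. I would then note that all objects involved --- the generating variable words, the substitution operation $w[u]$ defined in the excerpt, the enumeration of $A^{|Y|}$, and the bookkeeping of which position of~$X$ carries the surviving variable --- are coded by finite sequences of bounded length, so each step is a $\Delta^0_0$-definable manipulation and the whole induction is a $\Sigma^0_0$ (bounded) induction available in~$\RCA_0$. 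Thus $\RCA_0$ proves: there is a primitive recursive $HJ(k,\ell)$ such that whenever $|X| \geq HJ(k,\ell)$, any combinatorial $X$-space~$S$ over an alphabet of size~$k$ and any $f : S \to \ell$ admit some $x \in X$ and an $f$-homogeneous combinatorial $\{x\}$-subspace of~$S$.

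The step I expect to be the main obstacle is bookkeeping rather than mathematical depth: one must be careful that the combinatorial $\{x\}$-subspace produced is genuinely a subspace of the \emph{given} $X$-space~$S$ (not merely of the abstract cube $A^{d}$), i.e. that the generating word of the line has the form $w[u]$ for $w$ the generating word of~$S$ and~$u$ an appropriate $I$-variable word of length~$|X|$ with $I = \{i\}$, as spelled out in the paragraph following the definition of combinatorial subspaces. This requires tracking, through each stage of Shelah's product construction, the exact position in~$X$ to which the surviving variable is pinned, and checking that the "ordered" constraint on variable words (first occurrence of $x_j$ at position~$y_j$, last occurrence before~$y_{j+1}$) is preserved under the compositions used. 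Since the composition of ordered substitutions is again an ordered substitution, this is routine but must be stated carefully; I would handle it by maintaining, as an invariant of the induction, an explicit ordered embedding of the current smaller cube into~$S$, and reading $x$ and the final line off from that embedding. The quantitative bound $HJ(k,\ell)$ is then exactly Shelah's $\mathrm{HJ}$-number, which is primitive recursive, completing the proof.
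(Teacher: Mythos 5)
The paper does not actually prove this theorem; it is stated as a cited result (Shelah~\cite{shelah1988primitive}), with the surrounding text merely remarking that the argument is elementary finite combinatorics and therefore formalizable over~$\RCA_0$. Your proposal is a correct elaboration of exactly that citation, and the bookkeeping concern you flag --- tracking which position of~$X$ carries the surviving variable and verifying that ordered substitutions compose to ordered substitutions so the resulting line really is a combinatorial $\{x\}$-subspace of the given~$S$ --- is the right thing to worry about; the only small slip is that by the paper's definitions a combinatorial $X$-space has dimension exactly~$|X|$ (one variable kind per element of~$X$), not merely at most~$|X|$, which if anything makes your translation from Shelah's dimension bound to the set~$X$ cleaner.
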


As mentioned above, the Hales-Jewett theorem admits a multidimensional generalization, known as the Graham-Rothschild theorem~\cite{graham2013ramsey}. Its proof follows from the Hales-Jewett theorem by elementary combinatorics.
The original Graham-Rothschild theorem allows the variable kinds to be unordered, that is, that the first occurrence of~$x_i$ must appear before the first occurrence of~$x_{i+1}$, but the last occurrence of~$x_i$ might appear later. We are going to use a slightly modified version of the Graham-Rothschild theorem, due to Dodos et al~\cite[Theorem 2.1]{dodos2014density}, which requires the variable words to be ordered. The proof of its primitive recursive bound is an adaptation of Shelah's bound~\cite{shelah1988primitive} for the original Graham-Rothschild theorem, which can be found in Dodos and Kanellopoulos~\cite[Theorem 2.9, Theorem 2.15]{dodos2016ramsey}.


\begin{theorem}[Graham-Rothschild~\cite{graham2013ramsey}, Dodos and Kanellopoulos~\cite{dodos2016ramsey}, $\RCA_0$]\label[theorem]{thm:graham-rothschild}
There exists a primitive recursive function~$GR(k, d, m, \ell)$ such that for every set~$X$ with $|X| \geq GR(k, d, m, \ell)$, every combinatorial $X$-space~$S$ over an alphabet~$A$ of size~$k$ and every coloring $f$ of the $m$-dimensional subspaces of $S$ with $\ell$ colors, there exists some $d$-dimensional subspace of $S$, all of whose $m$-dimensional subspaces have the same color.
\end{theorem}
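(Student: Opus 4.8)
The plan is to derive \Cref{thm:graham-rothschild} from the Hales--Jewett theorem (\Cref{thm:hales-jewett}) by the classical argument of Graham and Rothschild, in the explicit quantitative form due to Shelah~\cite{shelah1988primitive} and Dodos and Kanellopoulos~\cite{dodos2016ramsey}, and then to check that this argument produces a \emph{primitive recursive} function $GR$ and formalizes over~$\RCA_0$. These last two points are where our interest lies, and they are essentially automatic: a combinatorial space over a finite alphabet is a finite object, coded by its (finite) generating variable word, and the index set~$X$ is finite, so the whole construction manipulates only finite data; the inductions it uses are inductions on the standard-like parameters~$d$, $m$ and~$\ell$ with $\Delta^0_0$ (or at worst $\Sigma^0_1$) induction formulas, hence are available already in~$\ISig_1 \subseteq \RCA_0$. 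Thus the substance of the proof is the finitary combinatorics, which I now recall.

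One argues by induction on~$d$ (with~$d \geq m$). The base case~$d = m$ is trivial: a $d$-dimensional space has a single $m$-dimensional subspace, namely itself, so $GR(k,m,m,\ell) = m$ works. The base case~$m = 0$ is a coloring of the points of~$S$ for which we seek a monochromatic $d$-dimensional subspace, and it is handled by iterating \Cref{thm:hales-jewett} along the ordered block structure: split the ambient space into an initial block and a tail, colour each point~$p$ of the initial block by the restriction of~$f$ to the copy of the tail sitting over~$p$ (a colouring with boundedly --- in fact primitive recursively --- many colours), apply Hales--Jewett on the initial block to get a combinatorial line on which all these restrictions coincide, apply the inductive case~$(d-1,0)$ to the common restriction inside the tail, and concatenate the line with the resulting $(d-1)$-dimensional monochromatic subspace; the block ordering makes the assembled variable word ordered, as required.

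For the inductive step with~$0 < m < d$ one peels off a single combinatorial line in the same style: split the ambient space, use Hales--Jewett on the initial block to homogenize the ``leaf colourings'' of the $m$-dimensional subspaces of the tail, and then apply the inductive hypothesis inside the tail, with parameters~$(d-1,m,\ell)$ and~$(d-1,m-1,\ell)$, to obtain a $(d-1)$-dimensional subspace simultaneously homogeneous for the stabilized colouring of its $m$-dimensional subspaces and for the induced colouring of its $(m-1)$-dimensional subspaces. Since everything in sight is a composition of the function~$HJ$ from \Cref{thm:hales-jewett} with the primitive recursive function counting the $m$-dimensional subspaces of an $n$-dimensional space over a $k$-letter alphabet, the resulting~$GR$ is primitive recursive.

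The one genuinely delicate point --- and the place from which the fast growth of~$GR$ comes --- is that the $m$-dimensional subspaces of the assembled $d$-dimensional space fall into two kinds, those that use the peeled-off variable and those that do not, which a priori end up monochromatic in two \emph{different} colours; forcing these colours to coincide requires the standard additional bookkeeping (an extra induction, essentially on the number of colours~$\ell$, as in Shelah's proof of Hales--Jewett), carried out while keeping every intermediate variable word ordered --- we are proving the \emph{ordered} version of Graham--Rothschild, so amalgamation steps that would permute variable kinds are forbidden. I expect this to be the main obstacle; the remainder of the argument is routine finite combinatorics.
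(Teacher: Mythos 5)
The paper does not supply its own proof of this theorem: the paragraph preceding the statement explicitly delegates the primitive-recursive bound to Dodos and Kanellopoulos~\cite[Theorems 2.9 and 2.15]{dodos2016ramsey}, who adapt Shelah's argument to ordered variable words. So there is nothing in the paper to compare your attempt against; I can only assess the sketch on its own terms.

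Your outline reconstructs the correct overall shape --- iterate Hales--Jewett with product colourings along the ordered block structure, and induct on $d$ and $m$ --- and your treatment of the base case $m = 0$ is right. However, there is a genuine gap at the inductive step $0 < m < d$. After peeling off a line $L$ and stabilizing the leaf colourings, the $m$-dimensional subspaces of the assembled space that involve $L$ are \emph{not} simply $(m-1)$-dimensional subspaces of the tail: in the ordered setting, $L$'s variable may fuse with an initial segment of the tail's variable kinds, so these subspaces are parametrized by a fusion depth together with an $(m-1)$-dimensional subspace of what remains, and the inductive hypothesis with parameters $(d-1,m-1,\ell)$ does not directly apply to them. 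Even if both kinds of $m$-dimensional subspace could be made separately monochromatic, one would still have to reconcile the two resulting colours. You recognize this as ``the one genuinely delicate point'' and observe that it needs ``standard additional bookkeeping'' via ``an extra induction, essentially on the number of colours,'' but you do not carry it out --- and that amalgamation is precisely the substantive content of the Graham--Rothschild proof; the rest really is routine. By contrast, your last paragraph is correct and complete: once the finite combinatorics is granted, primitive recursiveness of $GR$ and formalizability in $\RCA_0$ follow automatically, since every object involved is finite and all inductions run over externally fixed parameters with low-complexity matrices.
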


\begin{definition}
A set $X \subseteq_{fin} \NN$ is said to be $\omega^r \cdot s\mbox{-large}(\theta, \GR)$ if for every $\ell,k < \min X$, every combinatorial $X$-space over an alphabet $A$ of size $k$ and every coloring $f : S \to \ell$, there exists some $\omega^r \cdot s\mbox{-large}(\theta)$ subset $Y \subseteq X$ and an $f$-homogeneous combinatorial $Y$-subspace of $S$.
\end{definition}

For the following lemma, recall that a set $X$ is $\omega^0$-large$(\theta)$ iff $X \neq \emptyset$.
It therefore states that if $X$ is sufficiently large, then the size of~$X$ will be large enough to be able to apply the  Hales-Jewett theorem for $\min X - 1$ colors and alphabet of size~$\min X - 1$, and get a monochromatic combinatorial line.

\begin{lemma}\label[lemma]{lem:hales-jewett}
There exists some $n_0 \in \omega$ such that $\RCA_0$ proves that if $X$ is $\omega^{n_0}\mbox{-large}(\theta)$ then $X$ is $\omega^0\mbox{-large}(\theta,\GR)$.  
\end{lemma}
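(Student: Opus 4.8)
The plan is to combine three ingredients: the primitive recursive bound $HJ(k,\ell)$ from \Cref{thm:hales-jewett}, the fast-growing-size lemma \Cref{lem:fast-growing-size}, and the structural fact that a combinatorial $\{x\}$-subspace of a combinatorial $X$-space is automatically $\theta$-apart-compatible in the trivial way, since $\omega^0$-largeness$(\theta)$ only asks for non-emptiness. Concretely, I would first observe that the map $k \mapsto HJ(k,k)$ is primitive recursive, so by \Cref{lem:fast-growing-size} there is a fixed $n_0 \in \omega$ such that every $\omega^{n_0}$-large$(\theta)$ set $X$ satisfies $|X| > HJ(\min X - 1, \min X - 1)$. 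This $n_0$ is the constant claimed in the lemma, and the choice is made purely at the meta-level (in $\omega$), so no induction beyond what $\RCA_0$ already has is needed to fix it.

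Next, working in $\RCA_0$, I would unfold the definition of $\omega^0$-large$(\theta,\GR)$: given $X$ that is $\omega^{n_0}$-large$(\theta)$, we must show that for every $\ell, k < \min X$, every combinatorial $X$-space $S$ over an alphabet $A$ of size $k$, and every coloring $f : S \to \ell$, there is an $\omega^0$-large$(\theta)$ subset $Y \subseteq X$ together with an $f$-homogeneous combinatorial $Y$-subspace of $S$. Since $\ell, k < \min X$, monotonicity of $HJ$ (or just the bound $HJ(k,\ell) \leq HJ(\min X - 1, \min X - 1)$ for $k,\ell \le \min X - 1$) gives $|X| \geq HJ(k,\ell)$. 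Then \Cref{thm:hales-jewett} applies directly to $S$ and $f$: it yields some $x \in X$ and an $f$-homogeneous combinatorial $\{x\}$-subspace of $S$. Taking $Y = \{x\}$ finishes the argument, because $Y$ is non-empty and hence $\omega^0$-large$(\theta)$ by definition, and the $\{x\}$-subspace produced is exactly a combinatorial $Y$-subspace.

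The only subtlety I would be careful about is matching the combinatorial bookkeeping: \Cref{thm:hales-jewett} is stated for $|X| \ge HJ(k,\ell)$ where $k$ is the alphabet size, whereas here $k$ ranges over all values below $\min X$, so I would make explicit that $HJ$ can be taken non-decreasing in both arguments (replacing it by $HJ'(k,\ell) = \max_{k' \le k, \ell' \le \ell} HJ(k',\ell')$ if necessary, still primitive recursive), which justifies $|X| > HJ(\min X - 1, \min X - 1) \ge HJ(k,\ell)$ for all relevant $k,\ell$. I do not expect any genuine obstacle here: the content is entirely carried by \Cref{thm:hales-jewett} and \Cref{lem:fast-growing-size}, and the $\theta$-apartness component is vacuous at dimension $0$. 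This is really the base case of the inductive construction of largeness bounds for $\GR$ (and ultimately for $\OVW$) that the rest of \Cref{graham-rothschild} will build on.
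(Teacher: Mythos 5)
Your proof is correct and follows essentially the same route as the paper: identify $k \mapsto HJ(k,k)$ as primitive recursive, invoke \Cref{lem:fast-growing-size} to fix $n_0$ at the meta-level so that $\omega^{n_0}$-large$(\theta)$ sets $X$ satisfy $|X| \geq HJ(\min X - 1, \min X - 1)$, and then apply \Cref{thm:hales-jewett} directly with $Y = \{x\}$. The monotonicity caveat you flag is a reasonable piece of bookkeeping that the paper leaves implicit (by replacing $HJ$ with a majorizing non-decreasing primitive recursive function if necessary), and your observation that $\theta$-apartness is vacuous at dimension zero is exactly the reason $Y = \{x\}$ suffices.
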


\begin{proof}
For $X$ to be $\omega^0\mbox{-large}(\theta,\GR)$, it is sufficient that $|X| \geq HJ(\min X - 1, \min X - 1)$. By \Cref{thm:hales-jewett}, $x \mapsto HJ(x,x)$ is primitive recursive, so by \Cref{lem:fast-growing-size}, there exists some $n_0$ such that every $\omega^{n_0}\mbox{-large}(\theta)$ set~$X$ satisfies $|X| \geq HJ(\min X - 1, \min X - 1)$.
\end{proof}

In the following lemma, it might be helpful for the reader to see a combinatorial $X$-space over an alphabet~$A$ as the set of leaves of a tree isomorphic to $A^{\leq |X|}$.

\begin{lemma}\label[lemma]{lem:gr-largeness}
$\RCA_0$ proves that for all $b \in \NN$ and every finite set~$X$, if $X$ is $\omega^{2b + n_0 + 1}\mbox{-large}(\theta)$ and sparse then $X$ is $\omega^b\mbox{-large}(\theta,\GR)$. 
\end{lemma}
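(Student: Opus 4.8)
The plan is to prove the statement by induction on~$b$. The base case $b=0$ is exactly Lemma~\ref{lem:hales-jewett}, provided $2b+n_0+1 = n_0+1 \geq n_0$, which holds. For the inductive step, suppose the statement holds for~$b$ and let~$X$ be $\omega^{2(b+1)+n_0+1}$-large$(\theta)$ and sparse; I must show $X$ is $\omega^{b+1}$-large$(\theta,\GR)$. So fix $\ell, k < \min X$, a combinatorial $X$-space~$S$ over an alphabet~$A$ of size~$k$, and a coloring $f : S \to \ell$. By definition of $\omega^{(b+1)\text{-ish}}$-largeness$(\theta)$, since $X \setminus \min X$ is $(\omega^{2b+n_0+3}\cdot \min X)$-large$(\theta)$, I can peel off the first element $m_0 = \min X$ and split $X \setminus m_0$ into $m_0$ many pairwise $\theta$-apart subsets $Z_0 < \dots < Z_{m_0 - 1}$, each $\omega^{2b+n_0+3}$-large$(\theta)$.

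The idea is then to process these blocks one at a time, using each $Z_i$ to fix the behaviour of~$f$ on one additional "level" of variable words and peeling an $\omega^b$-large$(\theta)$ (or in fact $\omega^{2b+n_0+1}$-large, to feed the inductive hypothesis) piece out of it, while passing to a combinatorial subspace of~$S$ on which the relevant restriction of~$f$ has been made homogeneous. Concretely, since each $Z_i$ is $\omega^{2b+n_0+3} = \omega^{2b+n_0+1+2}$-large$(\theta)$, I can apply Lemma~\ref{lem:split-largeness} (with $a = 2b+n_0+1$ and the "$b$" of that lemma equal to~$1$, so $a + 2\cdot 1 + 1 = 2b+n_0+4$; if the exponent bookkeeping is off by a small constant I will simply start from $\omega^{2(b+1)+n_0+1}$ with a slightly larger split, the point being it is still linear in the data) to get inside $Z_i$ a family of pairwise $\theta$-apart $\omega^{2b+n_0+1}$-large$(\theta)$ blocks whose product sets are $\omega$-large$(\theta)$; by Lemma~\ref{lem:hales-jewett} such a block is $\omega^0$-large$(\theta,\GR)$, which lets me find, inside the part of the space indexed by~$Z_i$, a variable word making one more coordinate of~$f$ homogeneous. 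Iterating over all $m_0$ blocks and collecting the chosen positions yields a combinatorial $W$-subspace~$S'$ of~$S$ with $W \subseteq X$, together with the surviving $\omega^{2b+n_0+1}$-large$(\theta)$ block $Z^\ast$ from the last stage, such that $f$ restricted to $S'$ factors through the combinatorial-subspace-with-free-variables-in-$Z^\ast$ structure; I then apply the inductive hypothesis to $Z^\ast$ (which is $\omega^{2b+n_0+1}$-large$(\theta)$ and, being a subset of the sparse set~$X$, sparse) to obtain an $\omega^b$-large$(\theta)$ subset $Y \subseteq Z^\ast \subseteq X$ and an $f$-homogeneous combinatorial $Y$-subspace of~$S'$, hence of~$S$. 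Finally, $Y \cup \{m_0\}$ (or $Y$ together with enough of the earlier chosen points) is $\omega^{b+1}$-large$(\theta)$ by Definition~\ref{defi:largeness-rca0-bsig2-t}, since $Y$ is $\omega^b$-large$(\theta)$ above $m_0 = \min X$ and we need $m_0$ such blocks — here I must be careful that the induction actually produced $\min X$-many $\omega^b$-large$(\theta)$ pieces, not just one, which is why the blocks $Z_i$ must each contribute a surviving $\omega^b$-large$(\theta)$ set rather than being entirely consumed.

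The main obstacle is the combinatorial bookkeeping in this last point: getting the final set to be genuinely $\omega^{b+1}$-large$(\theta)$ requires that each of the $\min X$ blocks $Z_0, \dots, Z_{m_0-1}$ yields an $\omega^b$-large$(\theta)$ piece that is retained in the end, with all these pieces pairwise $\theta$-apart — so the iteration must be organized so that fixing the colour on the levels coming from $Z_0, \dots, Z_{i-1}$ does not destroy the largeness available in $Z_i, \dots, Z_{m_0-1}$, and so that the Hales–Jewett / Graham–Rothschild applications at stage~$i$ only ever restrict the alphabet and the space, never the integers living in later blocks. The reason a constant-per-level overhead in the exponent suffices (giving the stated linear-in-$b$ bound $2b + n_0 + 1$, modulo adjusting small additive constants) is that each stage costs only one Hales–Jewett application, i.e. a fixed $\omega^{n_0}$, plus the fixed split cost from Lemma~\ref{lem:split-largeness}; threading this correctly through all $\min X$ stages while keeping $\theta$-apartness is where the care is needed, but no new idea beyond the lemmas already established is required, and the whole argument is visibly formalizable in~$\RCA_0$.
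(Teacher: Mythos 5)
Your proposal takes a genuinely different route from the paper's, and the route has a gap you yourself flag but do not close. The paper does \emph{not} induct on~$b$. It applies \Cref{lem:split-largeness} once, globally, with parameters chosen so that $X \setminus \{\min X\}$ splits into pairwise $\theta$-apart blocks $X_0 < \dots < X_{a-1}$, each only $\omega^{n_0}$-large$(\theta)$, with the crucial property that \emph{every} transversal $Y \in \prod_{i<a} X_i$ is already $\omega^b$-large$(\theta)$. It then runs the Hales--Jewett argument once per block, from the rightmost block inward, with the product-colouring trick and sparsity keeping the number of colours below $\min X_i$ at each stage; the output is a single element $y_i \in X_i$ per block, and the resulting $Y = \{y_0,\dots,y_{a-1}\}$ is $\omega^b$-large$(\theta)$ \emph{by construction}, because it lies in $\prod_i X_i$. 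The $\omega^b$-largeness is baked into the split before any colouring argument begins; the iteration only ever needs to pick singletons.

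Your plan, by contrast, inducts on~$b$ and at the end of the inductive step tries to manufacture an $\omega^{b+1}$-large$(\theta)$ set from the output of the inductive hypothesis. As you correctly observe, the inductive hypothesis hands you a \emph{single} $\omega^b$-large$(\theta)$ set~$Y$ from the surviving block $Z^\ast$, while $\omega^{b+1}$-large$(\theta)$ requires $\min X$ pairwise $\theta$-apart $\omega^b$-large$(\theta)$ subsets. Your proposed repair --- arranging for each of the $\min X$ blocks $Z_i$ to contribute a retained $\omega^b$-large$(\theta)$ piece --- is not actually carried out, and it is not clear it can be, within the stated budget: at stage~$i$ the colour-fixing via Hales--Jewett must pick specific values for variables indexed by $Z_i$, which consumes $Z_i$; retaining an $\omega^b$-large subset of \emph{each} $Z_i$ that is also part of the final $f$-homogeneous combinatorial subspace would require either a recursive subspace of $Z_i$-variables (which destroys the linear-in-$b$ exponent) or the observation that a \emph{singleton per block} suffices once the split has been chosen correctly --- and that observation is precisely the paper's trick. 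Your inner application of \Cref{lem:split-largeness} with the lemma's ``$b$'' set to~$1$ (producing $\omega^{2b+n_0+1}$-large pieces whose products are only $\omega^1$-large) also does not lead anywhere: $\omega^1$-largeness of the transversal is not what is needed, and it is not clear what role those pieces would play. The fix is to drop the induction on~$b$ and invoke \Cref{lem:split-largeness} with $a = n_0$ and the lemma's ``$b$'' equal to the target~$b$, so that the transversal is $\omega^b$-large$(\theta)$ from the start, and then extract one $y_i$ per block via Hales--Jewett.
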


\begin{proof}
Fix some~$b \in \NN$ such that the hypothesis holds.
Let $X$ be $\omega^{2b + n_0 + 1}\mbox{-large}(\theta)$, let $\ell,k < \min X$, let $S$ be a combinatorial $X$-space over an alphabet $A$ of size $k$ with generating variable word $w$, and let $f : S \to \ell$ be a coloring.

By \Cref{lem:split-largeness}, there exists pairwise $\theta$-apart $\omega^{n_0}\mbox{-large}(\theta)$ subsets of $X \setminus \{\min X\}$  $X_0 < \dots < X_{a-1}$ such that any $Y \in \prod_{i < a} X_i$ is $\omega^{b}\mbox{-large}(\theta)$. For simplicity, we will assume that $S$ is a combinatorial $(X_0 \cup \dots \cup X_{a-1})$-space (this can always be done by taking a subspace of the original space). \\

The coloring $f$ induce a coloring on every combinatorial $X_{a-1}$-subspace of~$S$. Let $n_{a-1}$ be the numbers of such subspaces, there is one of them for every possible value taken by the variables of $X_0 \cup \dots \cup X_{a-2}$, so 
$$n_{a-1} = k^{|X_0| + \dots + |X_{a-2}|} < k^{\max X_{a-2}}$$ 
We can consider the product of all those colorings $f' : S' \to \ell^{n_{a-1}}$ for $S'$ an arbitrary combinatorial $X_{a-1}$-space. By sparsity of $X$, 
$$\ell^{n_{a-1}} < \ell^{k^{\max X_{a-2}}} < \min X_{a-1}$$

And since $X_{a-1}$ is $\omega^{n_0}\mbox{-large}(\theta)$, by \cref{lem:hales-jewett}, there is some $y_{a-1} \in X_{a-1}$ and an $f$-homogeneous combinatorial $\{y_{a-1}\}$-subspace of $S'$. By definition of $f'$, for every combinatorial $X_{a-1}$-subspace of $S$, there is an $f$-homogeneous combinatorial $\{y_{a-1}\}$-subspace of it. \\


Consider $S_{a-1}$ an arbitrary combinatorial $(X_0 \cup \dots \cup X_{a-2})$-subspace of $S$ (for example, by fixing an arbitrary value for the variables of $X_{a-1}$). There is a one-to-one correspondence between the elements of $S_{a-1}$ and the combinatorial $X_{a-1}$-subspaces of $S$: every element of $S_{a-1}$ correspond to a possible value for the variables of $X_0 \cup \dots \cup X_{a-2}$ and therefore belongs to only one $X_{a-1}$-subspace of $S$ and vice versa, every $X_{a-1}$-subspace of $S$ contains only one element of $S_{a-1}$. So consider $f_{a-1} : S_{a-1} \to \ell$ a coloring that take any element of $S_{a-1}$ to the color of the $f$-homogeneous combinatorial $\{y_{a-1}\}$-subspace of the corresponding combinatorial $X_{a-1}$-subspace. 

We can then repeat the same argument and find some $y_{a-2} \in X_{a-2}$ such that, for every combinatorial $X_{a-2}$-subspace of $S_{a-1}$ there is an $f_{a-1}$-homogeneous combinatorial $\{y_{a-2}\}$-subspace of it. Therefore, for every combinatorial $(X_{a-2} \cup X_{a-1})$-subspace of $S$, there is an $f$-homogeneous combinatorial $\{y_{a-2}, y_{a-1}\}$-subspace of it.\\

Iterate the construction 
to find a sequence $y_0 \in X_0, \dots, y_{a-1} \in X_{a-1}$ such that, by letting $Y = \{y_0, \dots, y_{a-1}\}$, there is an $f$-homogeneous combinatorial $Y$-subspace of $S$. And $Y$ is $\omega^b\mbox{-large}(\theta)$ since $Y \in \prod_{i < a} X_i$.



   
\end{proof}

\section{Ordered Variable word theorem}\label[section]{sect:ovw}

We now turn to the quantitative analysis of the target theorem of this article, namely, the Ordered Variable Word theorem.
The main result of this section is \Cref{cor:largeness-OVW}, which is then used in \Cref{sect:consequences} to prove our conservation theorem.

\begin{definition}[Ordered variable word $Y$-tree] 
For $Y$ a set, an \emph{OVW $Y$-tree} over an alphabet $A$ is a set of the form $T = \{w[u] : u \in A^{\leq|Y|}\}$ for some ordered $Y$-variable word $w$ over~$A$. We call $w$ its \emph{generating variable word}. 
The \emph{dimension} of an OVW $Y$-tree $T$ over~$A$ is the number of variable kinds of its generating variable words, or equivalently the least~$n \in \omega$ such that $T$ is isomorphic to~$A^{\leq n}$.
A \emph{OVW $Y$-line} is an OVW $Y$-tree of dimension 1.
\end{definition}

\begin{definition}
For $T$ an OVW $X$-tree, an OVW subtree $T'$ of $X$ is an OVW $Y$-tree included in $T$ for some set $Y$.     
\end{definition}

A consequence of the definition is that if $T'$ is an OVW $Y$-subtree of an OVW $X$-tree~$T$, then $Y \subseteq X$ and the generating word $w'$ of $T'$ is equal to $w[u]$ for $w$ the generating word of $T$ and $u$ some $I$-variable word, where $X = \{ x_0, \dots, x_n \}$ and $Y = \{ x_i : i \in I \}$. We shall mainly consider two kinds of $Y$-subtrees: subtrees obtained by instantiating some variables of the generating word, which doesn't change its length, and subtrees obtained by truncating the generating word. \\

The following theorem is a finitary version of an infinitary theorem due to Carlson and Simpson~\cite{carlson1984dual}. The finitary version follows from its infinite version by compactness. Dodos et al.~\cite[Theorem 4.1]{dodos2014density} gave an elementary proof of a higher order variant of its finitary version, with a primitive recursive bound. The finitary version of the Ordered Variable word follows easily from the Graham-Rothschild theorem, and thus can almost be considered as folklore. The proof below appears in Dodos and Kanellopoulos~\cite[Proposition 4.10]{dodos2016ramsey}.

\begin{theorem}[Carlson-Simpson~\cite{carlson1984dual}, Dodos and Kanellopoulos.~\cite{dodos2016ramsey}, $\RCA_0$]\label[theorem]{thm:carlson-simpson-finitaire}
There exists a primitive recursive bound $CS(k,d,\ell)$ such that for every $X$ with $|X| \geq CS(k,d,\ell)$, every OVW $X$-tree $T$ over an alphabet $A$ of size $k$ and every coloring $f : T \to \ell$ there exists some subset $Y \subseteq X$ with $|Y| = d$ and an $f$-homogeneous OVW $Y$-subtree of $T$. 
\end{theorem}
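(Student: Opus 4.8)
The plan is to set $CS(k,d,\ell):=GR(k,d+1,1,\ell)$ and deduce the statement from the Graham--Rothschild theorem (\Cref{thm:graham-rothschild}) in essentially a single step, by reading the levels of an OVW tree off the \emph{root words} of combinatorial lines. First I would pass from $T$ to the combinatorial $X$-space $S=\{w[r]:r\in A^{|X|}\}$ generated by the same variable word $w$ as $T$: thus $S$ is exactly the top level $\{w[u]:|u|=|X|\}$ of $T$, and any combinatorial $Y$-subspace $S'$ of $S$ carries a canonical OVW $Y$-subtree $T'$ of $T$, namely the one generated by the generating variable word of $S'$ (as the paper already records, this word has the form $w[u]$). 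So it suffices to produce a suitable subspace $S'$.

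The key structural fact I would isolate is the following dictionary between tree levels and line roots: for a fixed variable $x_p$ of $w$, as $L$ ranges over the combinatorial lines of $S$ whose variable is $x_p$ --- equivalently, the lines obtained from $w$ by instantiating every variable other than $x_p$ --- the root word $\mathrm{rt}(L)$ (the generating word of $L$ cut just before its variable) ranges over \emph{exactly} the $p$-th level $\{w[u]:u\in A^{p}\}$ of $T$; the values chosen for the variables $x_{p+1},x_{p+2},\dots$ are irrelevant here, since in an ordered variable word all their occurrences lie past the first occurrence of $x_p$, hence past the truncation point. Granting this, I would colour each combinatorial line $L$ of $S$ by $\chi(L):=f(\mathrm{rt}(L))<\ell$ and apply \Cref{thm:graham-rothschild} with $m=1$ and target dimension $d+1$: since $|X|\ge GR(k,d+1,1,\ell)$, we get a $(d+1)$-dimensional combinatorial subspace $S'$ of $S$ all of whose combinatorial lines receive a single $\chi$-colour $\gamma$. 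Running the dictionary inside $S'$ now shows that $f$ is constantly $\gamma$ on each of the levels $0,1,\dots,d$ of the OVW $(d+1)$-tree $T'$ attached to $S'$ --- the only level not controlled is the top one, level $d+1$, which is $S'$ itself and is never a root of a line, which is precisely why I asked for one extra dimension.

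To finish, I would simply truncate: let $Y$ be the set of the $d$ smallest positions among the variable positions of $T'$, and let $T''$ be the OVW $Y$-subtree of $T'$ generated by the word obtained from the generating word $w'$ of $T'$ by cutting it just before its last variable. Then $Y\subseteq X$, $|Y|=d$, and the levels $0,\dots,d$ of $T''$ are literally the levels $0,\dots,d$ of $T'$, so $T''$ is $f$-homogeneous. The bound $CS(k,d,\ell)=GR(k,d+1,1,\ell)$ is primitive recursive because $GR$ is, and since the whole argument is finitary it is unproblematic to carry out in $\RCA_0$. I do not expect a genuine obstacle --- this is what is meant by the statement ``following easily'' from Graham--Rothschild --- but the one thing to get right is exactly the bookkeeping behind the dictionary above: that passing to a combinatorial subspace corresponds to instantiating and/or truncating the generating variable word, that the resulting OVW tree really is a subtree of $T$, and that the root words of the lines of a subspace really do fill up its lower levels.
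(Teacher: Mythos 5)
Your proposal is correct and follows essentially the same route as the paper: set $CS(k,d,\ell)=GR(k,d+1,1,\ell)$, pass to the leaf space $S$, colour each combinatorial line by the $f$-colour of its truncated generating word, apply Graham--Rothschild with $m=1$ at dimension $d+1$, and cut the resulting subspace's generating word before its last variable to obtain the homogeneous OVW $Y$-subtree. The extra care you take in spelling out the ``dictionary'' between tree levels and line roots (and why the instantiation of later variables is irrelevant because their occurrences lie past the truncation point) is exactly the bookkeeping the paper leaves implicit.
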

\begin{proof}
Fix a set~$X$ such that $|X| \geq GR(k, d+1, 1, \ell)$.
Fix an OVW $X$-tree $T$ over an alphabet $A$ of size $k$ and a coloring $f : T \to \ell$. Let $w$ be the generating variable word of $T$ and consider the combinatorial $X$-space $S$ corresponding to the set of leaves of the tree~$T$. A $1$-dimensional subspace of $S$ corresponds to an instantiation of all the kind of variables in $w$ except one, so define a coloring $g$ that takes any $1$-dimensional subspace of $S$ to the color (by $f$) of its generating variable word cut before the first apparition of its only kind of variables. By definition of $GR(k,d+1,1,\ell)$, there is a $(d+1)$-dimensional subspace $S'$ of $S$, all whose $1$-dimensional subspaces are of the same color for $g$. Let $w'$ be the generating word of $S'$, and let $w''$ be $w'$ cut before the first occurrence of its last variable: $x_d$, let $Y$ be such that $w''$ is a $Y$-variable word ($|Y| = d$). Then $\{w''[u] : u \in A^{\leq |Y|}\}$ is an $f$-homogeneous OVW $Y$-subtree of $T$ (The $f$-color of $w''[u]$ is the $g$-color of any $1$-dimensional subspace of $S'$ whose non instantiated variable is the $|u|$-th one).
\end{proof}

Note that in the previous proof, we used the Graham-Rothschield theorem with colorings of 1-dimensional spaces to prove the Carlson-Simpson theorem which is about colorings of 0-dimensional OVW trees, that is, colorings of words. This can be generalized to prove a higher order version of the finite Carlson Simpson theorem with primitive recursive bounds (see Dodos and Kanellopoulos~\cite[Theorem 4.21]{dodos2016ramsey}). However, only the 0-dimensional version will be used in this article.

\begin{definition}
A set $X \subseteq_{fin} \NN$ is said to be $\omega^r \cdot s\mbox{-large}(\theta, \OVW)$ if for every $\ell,k < \min X$, every OVW $X$-tree $T$ over an alphabet $A$ of size $k$ and every coloring $f : T \to \ell$, there is an $\omega^r \cdot s\mbox{-large}(\theta)$ subset $Y \subseteq X$ and an $f$-homogeneous OVW $Y$-subtree of $T$.

\end{definition}

Note that by definition of an ordered $Y$-variable sequence~$w$, $|w| \geq \max Y$.


\begin{lemma}\label[lemma]{lem:cs-omega0min-large}
There exists some $n_1 \in \omega$ such that $\RCA_0$ proves that is $X$ is $\omega^{n_1}\mbox{-large}(\theta)$ then $X$ is $\omega^0\cdot(\min X - 1)\mbox{-large}(\theta,\OVW)$.  
\end{lemma}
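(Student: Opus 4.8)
The plan is to reduce to the finite Carlson--Simpson theorem (\Cref{thm:carlson-simpson-finitaire}) combined with the size bound coming from fast-growing largeness (\Cref{lem:fast-growing-size}), much in the spirit of the proof of \Cref{lem:hales-jewett}. Recall that to witness $\omega^0 \cdot (\min X - 1)\mbox{-large}(\theta, \OVW)$ we must, given any $\ell, k < \min X$, any OVW $X$-tree $T$ over an alphabet of size $k$, and any coloring $f : T \to \ell$, produce an $\omega^0 \cdot (\min X - 1)\mbox{-large}(\theta)$ subset $Y \subseteq X$ together with an $f$-homogeneous OVW $Y$-subtree of $T$. By \Cref{defi:largeness-rca0-bsig2-t}, a set is $\omega^0 \cdot (\min X - 1)\mbox{-large}(\theta)$ as soon as it contains $\min X - 1$ pairwise $\theta$-apart nonempty subsets, so it suffices to find $Y \subseteq X$ of size $\min X - 1$ whose elements are sufficiently spread out to be pairwise $\theta$-apart in the required way --- but note that the definition of an $f$-homogeneous OVW $Y$-subtree does not itself constrain the elements of $Y$ much, so the real content is just getting $|Y| \geq \min X - 1$ while remaining inside a homogeneous subtree.

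Concretely, I would first observe that $d \mapsto CS(d, d, d)$ is primitive recursive by \Cref{thm:carlson-simpson-finitaire}, and more precisely that the map sending $m$ to $CS(m, m, m)$ dominates $CS(k, \min X - 1, \ell)$ whenever $k, \ell < \min X$ and $m \geq \min X$; hence by \Cref{lem:fast-growing-size} there is some $n_1 \in \omega$ such that every $\omega^{n_1}\mbox{-large}(\theta)$ set $X$ satisfies $|X| > CS(\min X - 1, \min X - 1, \min X - 1) \geq CS(k, \min X - 1, \ell)$ for all $k, \ell < \min X$. Given such an $X$ and an instance $(T, f)$ with $\ell, k < \min X$, apply \Cref{thm:carlson-simpson-finitaire} with $d = \min X - 1$: since $|X| \geq CS(k, \min X - 1, \ell)$, there is a subset $Y \subseteq X$ with $|Y| = \min X - 1$ and an $f$-homogeneous OVW $Y$-subtree of $T$. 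It remains to check that this $Y$ is $\omega^0 \cdot (\min X - 1)\mbox{-large}(\theta)$: since $|Y| = \min X - 1$, the $\min X - 1$ singletons $\{y\}$ for $y \in Y$ are each $\omega^0\mbox{-large}(\theta)$, and they are pairwise $\theta$-apart because $\theta$-apartness of singletons is trivially satisfied (or, if not, one absorbs a constant factor into $n_1$ by instead splitting an $\omega^{n_1}\mbox{-large}(\theta)$ set into blocks via \Cref{lem:split-largeness} and taking one element of $Y$ from each block).

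The only mild subtlety --- and the step I expect to need the most care --- is the interaction between $\theta$-apartness and the choice of $Y$: a priori \Cref{thm:carlson-simpson-finitaire} returns an arbitrary $d$-element subset of $X$, with no control over gaps, so if $\theta$-apartness of singletons is not automatic one cannot conclude directly. The clean fix is to preprocess $X$: apply \Cref{lem:split-largeness} to an $\omega^{n_1}\mbox{-large}(\theta)$ set to extract pairwise $\theta$-apart $\omega^0\mbox{-large}(\theta)$ blocks $X_0 < \dots < X_{N-1}$ with $N$ itself $\omega^0 \cdot (\min X - 1)\mbox{-large}$, i.e.\ $N \geq \min X - 1$, arrange that each block is large enough (again via \Cref{lem:fast-growing-size}) that picking any one element per block still leaves at least $CS(k, \min X - 1, \ell)$ choices overall, run Carlson--Simpson inside this reduced tree, and read off a $Y$ meeting $\min X - 1$ distinct blocks, hence pairwise $\theta$-apart. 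This costs only a fixed additive constant in the exponent, so the resulting $n_1$ is still a standard natural number, and $\RCA_0$ suffices throughout since everything reduces to the primitive recursive bound of \Cref{thm:carlson-simpson-finitaire} and the elementary combinatorics of \Cref{lem:fast-growing-size} and \Cref{lem:split-largeness}.
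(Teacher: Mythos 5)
Your proposal follows the paper's own route---bound $|X|$ from below by a primitive recursive Carlson--Simpson threshold using \Cref{thm:carlson-simpson-finitaire} and \Cref{lem:fast-growing-size}---but you correctly spot a subtlety that the paper's terse proof glosses over: the subset $Y$ returned by \Cref{thm:carlson-simpson-finitaire} has the right cardinality but there is no reason a priori for its elements to be pairwise $\theta$-apart, and for an arbitrary $\Delta^{0,C,c}_0$ formula $\theta$, $\theta$-apartness of singletons $\{a\} < \{b\}$ (that is, $\forall x < a\, \exists y < b\, \forall z < b\, \theta(x,y,z)$) is certainly \emph{not} automatic. So the hedge ``if $\theta$-apartness of singletons is not automatic'' should be dropped; the preprocessing you describe is required, not optional. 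Concretely, the clean argument is exactly the one you sketch: apply \Cref{lem:split-largeness} with $a = 0$ and with $b$ chosen via \Cref{lem:fast-growing-size} for the primitive recursive function $m \mapsto CS(m,m,m)$, obtaining pairwise $\theta$-apart blocks $X_0 < \dots < X_{N-1}$ whose transversals $H \in \prod_i X_i$ are $\omega^b$-large$(\theta)$, hence $N = |H| \geq CS(\min X - 1, \min X - 1, \min X - 1)$; pick $a_i \in X_i$, pass to an OVW $\{a_0,\dots,a_{N-1}\}$-subtree of $T$, and apply \Cref{thm:carlson-simpson-finitaire} inside it. The resulting $Y$ of size $\min X - 1$ meets $\min X - 1$ distinct blocks, so its singletons are pairwise $\theta$-apart by the subset-stability of $\theta$-apartness, and the exponent $n_1$ only grows by a fixed standard amount. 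Your argument is correct and, on this precise point, more careful than the proof appearing in the paper, which simply asserts that $|X| \geq CS(\min X - 1, \min X, \min X - 1)$ suffices without addressing the $\theta$-apartness of the Carlson--Simpson output.
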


\begin{proof}
For $X$ to be $\omega^0\cdot(\min X - 1)\mbox{-large}(\theta,\OVW)$, it is sufficient to have $|X| \geq CS(\min X - 1, \min X, \min X - 1)$.
By \Cref{thm:carlson-simpson-finitaire}, the function $x \mapsto CS(x,x+1,x)$ is primitive recursive, 
so by \Cref{lem:fast-growing-size}, there exists some~$n_1$ such that if $X$ is $\omega^{n_1}\mbox{-large}(\theta)$, then $|X| \geq CS(\min X - 1, \min X, \min X - 1)$.



\end{proof}

\begin{lemma}\label[lemma]{lem:ovw-large-preind}
$\RCA_0$ proves that for every $b,r \in \NN$, if the following statement holds:
\begin{itemize}
    \item \qt{Every $\omega^b\mbox{-large}(\theta)$ sparse set is $\omega^r\mbox{-large}(\theta, \OVW)$}
\end{itemize}
then the following statement holds: 
\begin{itemize}
    \item \qt{Every $\omega^{2b+n_0+3}$-large$(\theta)$ sparse set $X$ is $\omega^r\cdot (\min X - 1)\mbox{-large}(\theta, \OVW)$}.
\end{itemize} 
\end{lemma}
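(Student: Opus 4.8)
The statement asks us to boost "largeness$(\theta,\OVW)$" by a factor of $\min X - 1$ in the $\omega^r$ coefficient, at the cost of squaring the exponent $b$ and adding a constant (here $n_0 + 3$). This mirrors the structure of \Cref{lem:ovw-large-preind}'s own hypothesis conclusion being used later as an inductive step, and closely parallels \Cref{lem:gr-largeness}. The plan is to take an $\omega^{2b+n_0+3}$-large$(\theta)$ sparse set $X$, split it into many blocks using \Cref{lem:split-largeness}, peel off one $\{y_i\}$-line from each block using the hypothesis (and the Hales--Jewett/\Cref{lem:hales-jewett} bound to absorb the product of colorings across the already-fixed blocks), and reassemble the chosen points $y_0 < \dots < y_{a-1}$ into a set $Y$ that is $\omega^r$-large$(\theta)$ on each block but, because there are $\min X - 1$ many blocks, altogether yields an OVW $Y$-subtree of the required $\omega^r \cdot (\min X - 1)$-largeness$(\theta)$.

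Concretely, first I would fix $\ell, k < \min X$, an OVW $X$-tree $T$ over an alphabet $A$ of size $k$ with generating word $w$, and a coloring $f : T \to \ell$. Apply \Cref{lem:split-largeness} to $X \setminus \{\min X\}$ with the right parameters so that we obtain pairwise $\theta$-apart $\omega^{b}$-large$(\theta)$ subsets $X_0 < \dots < X_{\min X - 2}$ (that is, $\min X - 1$ of them, matched to the count forced by the $\omega^0 \cdot k$-largeness$(\theta)$ definition via $\min X \geq k+1$) such that every $H \in \prod_i X_i$ is $\omega^{?}$-large$(\theta)$ — I would need to track that the budget $2b + n_0 + 3$ leaves exactly $\omega^b$ per block after accounting for the $2 \cdot (\text{something}) + 1$ overhead of \Cref{lem:split-largeness} and the extra $n_0$ needed for the Hales--Jewett step below; a sparse subset of each $X_i$ can still be extracted for free since sparsity is a largeness notion. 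As in \Cref{lem:gr-largeness}, WLOG $T$ is an OVW $(X_0 \cup \dots \cup X_{\min X - 2})$-tree. Then, working from the last block backwards: the coloring $f$ induces a coloring on every OVW $X_{a-1}$-subtree; taking the product of these $n_{a-1} = k^{|X_0| + \dots + |X_{a-2}|} < k^{\max X_{a-2}}$ many colorings and using sparsity to bound $\ell^{n_{a-1}} < \min X_{a-1}$, I invoke the hypothesis "every $\omega^b$-large$(\theta)$ sparse set is $\omega^r$-large$(\theta,\OVW)$" on $X_{a-1}$ (after thinning it to remove the $n_0$ overhead that was reserved for turning $\omega^b$ into enough room — actually the hypothesis is stated for $\omega^b$ directly, so the $n_0$ goes toward the \emph{split}, not here) to obtain a point $y_{a-1} \in X_{a-1}$ and an $f$-homogeneous OVW $\{y_{a-1}\}$-subtree of every $X_{a-1}$-subtree of $T$, uniformly. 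Push the resulting colors down to an OVW $(X_0 \cup \dots \cup X_{a-2})$-tree and repeat.

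Iterating $a = \min X - 1$ times produces $y_0 \in X_0, \dots, y_{a-1} \in X_{a-1}$ and, setting $Y = \{y_0, \dots, y_{a-1}\}$, an $f$-homogeneous OVW $Y$-subtree of $T$. Since $y_i \in X_i$ and each $X_i$ is $\omega^b$-large$(\theta)$... wait — the hypothesis only guarantees a \emph{point} $y_i$, not an $\omega^r$-large$(\theta)$ set inside $X_i$; so I actually need to apply the hypothesis to get an $\omega^r$-large$(\theta)$ \emph{subset} $Z_i \subseteq X_i$ carrying the homogeneous subtree, and then $Y = \bigcup_i Z_i \in \prod_i Z_i'$-style reasoning shows $Y$ is $\omega^r \cdot (\min X - 1)$-large$(\theta)$ by the very definition of $\omega^r \cdot k$-largeness$(\theta)$ (the $Z_i$ are pairwise $\theta$-apart, being subsets of the pairwise $\theta$-apart $X_i$). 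The main obstacle is bookkeeping the exponent budget: verifying that \Cref{lem:split-largeness} with target $\omega^b$ on each of $\min X - 1$ blocks, plus the $n_0$ slack for \Cref{lem:hales-jewett} applied inside each block's color-product step, plus the $+3$, exactly fits inside $\omega^{2b+n_0+3}$ — and making sure the "product of colorings" counting ($\ell^{k^{\max X_{a-2}}} < \min X_{a-1}$) genuinely follows from sparsity with $\min X \geq 4$. Everything else is a routine downward recursion of the kind already carried out in \Cref{lem:gr-largeness}, formalizable in $\RCA_0$ since the recursion has length $\min X - 1$, a concrete integer.
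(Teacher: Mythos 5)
Your plan captures the broad outline (split into blocks, peel levels off backwards with product-coloring tricks, assemble a union of $\omega^r$-large$(\theta)$ pieces into an $\omega^r\cdot k$-large$(\theta)$ set) but has a genuine gap: you treat the final assembled OVW $Y$-subtree as $f$-homogeneous, and this is false. In the iterated construction, each invocation of the hypothesis on a block $X_i$ gives you an $f$-homogeneous OVW $Y_i$-subtree \emph{above each fixed node} $\sigma$, so the color is uniform \emph{within} each level block $Y_i$ (once the lower variables are suitably instantiated), but different levels can carry different colors. After processing all blocks, what you actually obtain is an OVW tree in which the color of $\tau$ is determined by the index $j$ with $|\tau|\in Y_j$ — a function of the level, not a single constant. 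The paper's proof therefore uses $a+1 = \ell(\min X - 2) + 1$ blocks (not $\min X - 1$) and concludes with a finite pigeonhole: among $\ell(\min X - 2)+1$ level colors drawn from $\ell$ possibilities, some color occurs on at least $\min X - 1$ of the $Y_j$'s, and one takes $Y$ to be the union of those. Your $\min X - 1$ blocks are not enough for that pigeonhole, and without it the conclusion \qt{$Y$-subtree is $f$-homogeneous} does not follow. (Sparsity gives $(\min X)^2 < \min(X\setminus\{\min X\})$, so the larger block count fits inside the budget.)

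A secondary issue is the exponent accounting. You waver on whether the $n_0$ is for the split or for Hales--Jewett; in the paper each block needs to be $\omega^{2b+n_0+1}$-large$(\theta)$ precisely so that \Cref{lem:gr-largeness} can be applied to it (yielding $\omega^b$-large$(\theta,\GR)$, used to make the induced coloring $f_i$ constant on an $\omega^b$-large$(\theta)$ subspace $Z_{i-1}$, which is then fed to the hypothesis). Two applications of the $\omega^{n+1}$-to-$\omega^n\cdot(\min)$ step account for the $+2$ and give arbitrarily many $\omega^{2b+n_0+1}$-large$(\theta)$ blocks, not a split à la \Cref{lem:split-largeness}; the leading $2b$ in the budget comes from \Cref{lem:gr-largeness}, not from the split. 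Your reassembly of the $\theta$-apart $\omega^r$-large$(\theta)$ pieces $Y_{i_0},\dots,Y_{i_{\min X-2}}$ into an $\omega^r\cdot(\min X - 1)$-large$(\theta)$ set is correct once the pigeonhole is in place.
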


\begin{proof}
Fix $r, b \in \NN$ such that the first statement holds.
Let $X$ be $\omega^{2b+n_0+3}\mbox{-large}(\theta)$ and sparse. Let us prove that $X$ is $\omega^r\cdot (\min X - 1)\mbox{-large}(\theta, \OVW)$: \\

Consider $\ell,k < \min X$, an OVW $X$-tree $T$ over an alphabet $A$ of size $k$ and a coloring $f : T \to \ell$.
The set $X \setminus \min X$ is $\omega^{2b+n_0+2}\mbox{-large}(\theta)$, so there exists pairwise $\theta$-apart $\omega^{2b+n_0+1}\mbox{-large}(\theta)$ subsets of $X \setminus (\min X \cup \min (X \setminus \min X))$, $X_0 < \dots < X_a$ with $a = \ell \times (\min X - 2) < (\min X)^2 < \min (X \setminus \{\min X\})$ (by sparsity). For simplicity, we will assume that $T$ is an OVW $(X_0 \cup \dots \cup X_{a})$-tree (this can always be done by taking a subtree of the original tree). \\


Above every $\sigma \in A^{\min X_{a}} \cap T$ is an OVW $X_a$-subtree of $T$. The number $n_a$ of such subtrees satisfies $n_a = k^{|X_0| + \dots + |X_{a-1}|}$ (there is one subtree for each instantiation of the variables of $X_0 \cup \dots \cup X_{a-1}$), so $n_a \leq k^{\max X_{a-1}}$.

The coloring $f$ induce a coloring on each of those trees, and since all these trees are isomorphic (not only isomorphic as trees, but they share the same structure as OVW trees: their generating word only differ for indexes less than $|\sigma|$), consider the product coloring $f' : T' \to \ell^{n_a}$ of all those coloring (for $T'$ an arbitrary such $X_a$-subtree). By sparsity of $X$, $\ell^{n_a} \leq \ell^{k^{\max X_{a-1}}} < \min X_{a}$.

Since $X_a$ is $\omega^{2b+n_0+1}\mbox{-large}(\theta)$ and sparse, then it is $\omega^{r}\mbox{-large}(\OVW, \theta)$, so there exists some $\omega^r\mbox{-large}(\theta)$ subset $Y_a \subseteq X_a$ and some $f'$-homogeneous OVW $Y_a$-subtree $S' \subseteq T'$. Therefore, by definition of $f'$, there exists an instantiation of the variables in $X_a \setminus Y_a$ making $f$-homogeneous the corresponding OVW $Y_a$-subtrees of $T$ above each $\sigma \in A^{\min X_{a}} \cap T$.
Let $T'_a$ be the corresponding OVW $(X_0 \cup \dots \cup X_{a-1} \cup Y_a)$-subtree of $T$. $T'_a$ has the property that above every $\sigma \in T'_a \cap A^{\min X_{a}}$, there is a color $c^{\sigma}_a \in \ell$ such that all the $\tau$ in the OVW $Y_a$-tree above $\sigma$ have that same color $c^{\sigma}_a$. \\

Let $T_a = T \cap A^{\leq \min X_{a}}$. Note that $T_a$ is an OVW $(X_0 \cup \dots \cup X_{a-1})$-subtree of $T$. Let $S_a$ be the corresponding combinatorial $(X_0 \cup \dots \cup X_{a-1})$-space, every element of $S_a$ correspond to one instantiation of the variables of $X_0 \cup \dots \cup X_{a-1}$ and therefore to exactly one of the OVW $X_a$-subtrees considered before. So, consider the coloring $f_a : S_a \to \ell$ that send any element of $S_a$ to the color $c^{\sigma}_a$ of its corresponding OVW $X_a$-subtree. 

There are at most $k^{|X_0 \cup \dots \cup X_{a-2}|} \leq k^{\max X_{a-2}}$ combinatorial $X_{a-1}$-subspaces of $S_a$ (one for each instantiation of the variables in $X_0 \cup \dots \cup X_{a-2}$) and $f_a$ induces a coloring on each of them. So, by using the same trick of considering the product coloring, we can apply \Cref{lem:gr-largeness} to get an $\omega^{b}\mbox{-large}(\theta)$ subset $Z_{a-1} \subseteq X_{a-1}$ such that there exists an instantiation of the variables in $X_{a-1} \setminus Z_{a-1}$ making $f_a$-homogeneous the corresponding combinatorial $Z_{a-1}$-subspaces of each combinatorial $X_{a-1}$-subspaces of $S_a$.

Since $Z_{a-1}$ is $\omega^{b}\mbox{-large}(\theta)$, then it is $\omega^{r}\mbox{-large}(\OVW, \theta)$, so there exists some $\omega^{r}\mbox{-large}(\theta)$ subset $Y_{a-1} \subseteq Z_{a-1}$ and some instantiation of the variables in $Z_{a-1} \setminus Y_{a-1}$ making $f$-homogeneous the corresponding OVW $Y_{a-1}$-subtrees of $T_a$ above each $\sigma \in A^{\min X_{a-1}} \cap T_a$ (again by considering a product coloring).

Let $T'_{a-1}$ be the corresponding OVW $(X_0 \cup \dots \cup X_{a-2} \cup Y_{a-1} \cup Y_{a})$-subtree of $T$. $T'_{a-1}$ has the property that for every $\sigma \in T'_{a-1} \cap A^{\min X_{a-1}}$ there is a tuple of color $(c^{\sigma}_{a-1}, c^{\sigma}_a)$ such that all the $\tau$ in the OVW $(Y_{a-1} \cup Y_{a})$-tree above $\sigma$ have color $c^{\sigma}_{a-1}$ if $|\tau| \in Y_{a-1}$ (by construction of $Y_{a-1}$) and color $c^{\sigma}_a$ if $|\tau| \in Y_a$ (by construction of $Z_{a-1}$). \\

We can then iterate the construction and obtain a sequence of $\omega^{r}\mbox{-large}(\theta)$ subsets $Y_{i} \subseteq X_{i}$ and of OVW $(X_0 \cup \dots \cup X_{i-1} \cup Y_i \cup \dots \cup Y_{a})$-subtrees $T'_i$ of $T$, where each $T'_i$ has the property that for every $\sigma \in T'_{i} \cap A^{\min X_i}$ there is a $(a-i+1)$-uple of colors $(c^{\sigma}_{i}, \dots, c^{\sigma}_a)$ such that all the $\tau$ in the OVW $(Y_{i} \cup \dots \cup Y_a)$-tree above $\sigma$ have color $c^{\sigma}_j$ if $|\tau| \in Y_j$. At each step of the construction, we consider the tree $T_i = T \cap A^{\leq\min X_i}$, the corresponding combinatorial subspace $S_i$, a coloring $f_i : S_i \to \ell^{a+1-i}$ (The coloring $f_i$ gives the tuple of colors corresponding to the OVW $(Y_{i+1} \cup \dots \cup Y_a)$-subtree above every element of $S_i$), an $f_i$-homogeneous combinatorial $Z_i$-subspace of $S_i$ for some $Z_i \subseteq X_i$ $\omega^{b}\mbox{-large}(\theta)$, and finally an $\omega^{r}\mbox{-large}(\theta)$ subset $Y_i \subseteq Z_i$ satisfying the desired properties and $T'_i$, the corresponding OVW $(X_0 \cup \dots \cup X_{i-1} \cup Y_i \cup \dots \cup Y_{a})$-subtree of $T$. \\

Therefore, the color of every $\tau \in T'_0$ is determined by the index $j \leq a$ such that $|\tau| \in Y_j$. By the finite pigeonhole principle, there is one color that appear at least $(\min X - 1)$ times, since $a+1 = \ell \times (\min X - 2) + 1$. Let $i_0 < \dots < i_{\min X - 2} \leq a$ be indices witnessing the pigeonhole principle, and let $Y = Y_{i_1} \cup \dots \cup Y_{i_{\min X - 2}}$. The set $Y$ is $\omega^{r}\cdot(\min X - 1)\mbox{-large}(\theta)$ and any OVW $Y$-subtree of $T'_0$ is $f$-homogeneous.
\end{proof}

\begin{lemma}\label[lemma]{lem:ovw-largeness-ind}
$\RCA_0$ proves that for every $b,r \in \NN$, if the following statement holds:
\begin{itemize}
    \item \qt{Every $\omega^b\mbox{-large}(\theta)$ sparse set~$X$ is $\omega^r\cdot (\min X  - 1)\mbox{-large}(\theta, \OVW)$}
\end{itemize}
then the following statement holds: 
\begin{itemize}
    \item \qt{Every $\omega^{2b+n_0+2}$-large$(\theta)$ sparse set is $\omega^{r+1}\mbox{-large}(\theta, \OVW)$}.
\end{itemize} 
\end{lemma}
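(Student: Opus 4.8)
The plan is to reproduce, almost line for line, the top‑to‑bottom tree‑processing of \Cref{lem:ovw-large-preind}, the only real change being that the hypothesis now available is the stronger one, which lets us get away with far fewer pieces. Fix $b,r$ for which the hypothesis holds, let $X$ be $\omega^{2b+n_0+2}\mbox{-large}(\theta)$ and sparse with $m=\min X$, and fix $\ell,k<m$, an OVW $X$-tree $T$ over an alphabet of size $k$ and a colouring $f:T\to\ell$. The key elementary remark is: if $W$ is a piece lying above $m$ to which we apply the hypothesis, it produces a homogenized chunk $Y'\subseteq W$ that is $\omega^r\cdot(\min W-1)\mbox{-large}(\theta)$; then for any non-empty $Y''\subseteq X$ with $Y''<W$ we have $\min(Y''\cup Y')=\min Y''<\min W$, so $(Y''\cup Y')\setminus\min(Y''\cup Y')\supseteq Y'$ is $\omega^r\cdot(\min W-1)\mbox{-large}(\theta)$, hence $\omega^r\cdot\min Y''\mbox{-large}(\theta)$, and therefore $Y''\cup Y'$ is already $\omega^{r+1}\mbox{-large}(\theta)$. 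So a single application of the hypothesis, at the upper of two pieces that receive the same colour, is enough, and only $\ell+1$ pieces are needed to make a pigeonhole work, instead of the $\Theta(\ell\cdot m)$ pieces of \Cref{lem:ovw-large-preind}. This is exactly why the required largeness of $X$ drops by one: $X\setminus\{m\}$ is only $\omega^{2b+n_0+1}\cdot m\mbox{-large}(\theta)$, but $\ell+1\le m$.

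First I would extract from $X\setminus\{m\}$ pairwise $\theta$-apart $\omega^{2b+n_0+1}\mbox{-large}(\theta)$ sparse pieces $W_0<\dots<W_\ell$ and, after instantiating every variable of $T$ at a position outside $\{m\}\cup W_0\cup\dots\cup W_\ell$, assume that $T$ is an OVW $(\{m\}\cup W_0\cup\dots\cup W_\ell)$-tree. Then I would process $W_\ell,W_{\ell-1},\dots,W_0$ in turn, exactly as in \Cref{lem:ovw-large-preind}: at piece $W_j$ one first applies \Cref{lem:gr-largeness} (legitimate since $W_j$ is $\omega^{2b+n_0+1}\mbox{-large}(\theta)$ and sparse, and the product colouring over the finitely many lower combinatorial subspaces uses fewer than $\min W_j$ colours by sparsity) to get an $\omega^b\mbox{-large}(\theta)$ sub-chunk $Z_j\subseteq W_j$ and an instantiation of $W_j\setminus Z_j$ collapsing the colours of the already-processed subtrees above; one then applies the hypothesis to $Z_j$ (again legitimate by sparsity, via a product colouring over the OVW subtrees above level $\min W_j$ in the truncation $T\cap A^{\le\min W_{j+1}}$) to get $Y_j\subseteq Z_j$ which is $\omega^r\cdot(\min Z_j-1)\mbox{-large}(\theta)$, hence $\omega^r\cdot(\min W_j-1)\mbox{-large}(\theta)$, and an instantiation of $Z_j\setminus Y_j$ making all the OVW $Y_j$-subtrees above level $\min W_j$ monochromatic. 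Fixing a base node at the bottom, the outcome is an OVW $(Y_0\cup\dots\cup Y_\ell)$-subtree $T'$ of $T$ in which the $f$-colour of a node $\tau$ is a function of the index $j$ with $|\tau|\in Y_j$ alone; write $c_j$ for that colour.

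It then remains to apply the finite pigeonhole principle to $c_0,\dots,c_\ell\in\ell$, obtain $j_0<j_1$ with $c_{j_0}=c_{j_1}=:c$, and put $Y=Y_{j_0}\cup Y_{j_1}$. By the remark of the first paragraph $Y$ is $\omega^{r+1}\mbox{-large}(\theta)$; its minimum $\min Y_{j_0}\ge\min(X\setminus\{m\})$ exceeds the standing lower bounds on $\min Y$ by sparsity; and any OVW $Y$-subtree of $T'$ has all of its nodes at lengths lying in $Y_{j_0}\cup Y_{j_1}$ plus a single maximal (truncation) length, each of which carries colour $c$, so that subtree is $f$-homogeneous. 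This gives the required solution, so $X$ is $\omega^{r+1}\mbox{-large}(\theta,\OVW)$.

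The only delicate point is the top-to-bottom bookkeeping itself: arranging that each chunk $Y_j$ becomes simultaneously monochromatic with a colour that does not depend on the (many) instantiations performed at lower levels — which is why a Graham--Rothschild step must precede each application of the hypothesis — and that the nodes at the maximal level and in the gaps between consecutive chunks carry the expected colour — which is why each intermediate tree is truncated at the minimum of the next piece. This bookkeeping is carried out in full in the proof of \Cref{lem:ovw-large-preind}, so I would invoke that argument verbatim; the genuinely new content is just the substitution of the strong hypothesis for the weak one together with the elementary largeness computations of the first and third paragraphs.
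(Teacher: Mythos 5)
Your proposal is correct and follows essentially the same route as the paper's proof: extract $\ell+1$ pairwise $\theta$-apart $\omega^{2b+n_0+1}$-large$(\theta)$ pieces from $X\setminus\{\min X\}$, rerun the top-to-bottom construction of \Cref{lem:ovw-large-preind} with the strengthened hypothesis to get blocks $Y_i$ that are $\omega^r\cdot(\min X_i-1)$-large$(\theta)$, pigeonhole to find two blocks receiving the same colour, and glue them to produce an $\omega^{r+1}$-large$(\theta)$ homogeneous set. The only cosmetic difference is that the paper combines a single element $y$ from the lower block with the full upper block $Y_j$, whereas you take the union of both full blocks — both work for the identical reason that the upper block is $\omega^r\cdot(\min W-1)$-large$(\theta)$ with $\min W-1$ at least the minimum of anything below it.
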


\begin{proof}
Fix $r, b \in \NN$ such that the first statement holds. Let $X$ be $\omega^{2b+n_0+2}\mbox{-large}(\theta)$ and sparse. Let us prove that $X$ is $\omega^{r+1}\mbox{-large}(\theta, \OVW)$: \\

Let $X$ be $\omega^{2b+n_0+2}\mbox{-large}(\theta)$ and sparse. 
Consider $\ell,k < \min X$, an OVW $X$-tree $T$ over an alphabet $A$ of size $k$ and a coloring $f : T \to \ell$.   

Let $X_0< \dots < X_{\ell}$ be $\omega^{2b+n_0+1}\mbox{-large}(\theta)$ subsets of $X$. 
By the same construction as in the proof of \Cref{lem:ovw-large-preind}, but replacing $\omega^r$-largeness$(\theta, \OVW)$
by $\omega^r\cdot (\min X  - 1)$-largeness$(\theta, \OVW)$, we obtain for each~$i \leq \ell$ an  $\omega^r\cdot (\min X  - 1)$-large$(\theta)$ subset
$Y_i \subseteq X_i$, and an OVW $(Y_0 \cup \dots \cup Y_{\ell})$-tree $T'_0$ such that the color of every $\rho \in T'_0$ is determined by the index $j \leq \ell$ such that $|\rho| \in Y_j$. Since $\ell + 1 > \ell$, there are indexes $i < j$ such that all the $\rho \in T'_0$ with $|\rho| \in Y_i \cup Y_j$ have the same color. Take some $y \in Y_i$, since $y \leq \min Y_j - 1$ and $Y_j$ is $\omega^r\cdot(\min X_j - 1)$-large$(\theta)$, then $\{y\} \cup Y_j$ is $\omega^{r+1}$-large$(\theta)$ and any OVW $(\{y\} \cup Y_j)$-subtree of $T'_0$ is $f$-homogeneous. 
\end{proof}

\begin{lemma}\label[lemma]{lem:ovw-largeness-goodind}
$\RCA_0$ proves that for every $b,r \in \NN$, if the following statement holds:
\begin{itemize}
    \item \qt{Every $\omega^b\mbox{-large}(\theta)$ sparse set is $\omega^r\mbox{-large}(\theta, \OVW)$}
\end{itemize}
then the following statement holds: 
\begin{itemize}
    \item \qt{Every $\omega^{4b+3n_0+8}$-large$(\theta)$ sparse set is $\omega^{r+1}\mbox{-large}(\theta, \OVW)$}.
\end{itemize} 
\end{lemma}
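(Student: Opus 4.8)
The plan is to obtain \Cref{lem:ovw-largeness-goodind} as a direct composition of the two preceding lemmas, \Cref{lem:ovw-large-preind} and \Cref{lem:ovw-largeness-ind}; the genuine combinatorial work (the iterated Graham--Rothschild / Hales--Jewett instantiation argument) has already been carried out there, so no new construction is needed. The only thing to get right is the arithmetic on the exponents, which is why I flag it as the ``main obstacle'', though it is really just bookkeeping.

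First I would fix $b, r \in \NN$ and assume the hypothesis of the lemma, namely that every $\omega^b\mbox{-large}(\theta)$ sparse set is $\omega^r\mbox{-large}(\theta, \OVW)$. Feeding this as the hypothesis of \Cref{lem:ovw-large-preind} gives that every $\omega^{2b+n_0+3}\mbox{-large}(\theta)$ sparse set $X$ is $\omega^r\cdot(\min X - 1)\mbox{-large}(\theta, \OVW)$. Next I would apply \Cref{lem:ovw-largeness-ind}, instantiating its parameter ``$b$'' by $2b+n_0+3$ and keeping ``$r$'' as $r$ (both lemmas are stated uniformly for all $b,r \in \NN$, so this substitution is legitimate). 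Its hypothesis is then exactly the statement just obtained, so its conclusion yields that every $\omega^{2(2b+n_0+3)+n_0+2}\mbox{-large}(\theta)$ sparse set is $\omega^{r+1}\mbox{-large}(\theta, \OVW)$.

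Finally, a one-line computation, $2(2b+n_0+3)+n_0+2 = 4b + 2n_0 + 6 + n_0 + 2 = 4b + 3n_0 + 8$, shows that this is precisely the desired conclusion; and since both \Cref{lem:ovw-large-preind} and \Cref{lem:ovw-largeness-ind} are proved in $\RCA_0$, so is this composition. The only care needed is to verify that \Cref{lem:ovw-largeness-ind}'s hypothesis (about $\omega^{r}\cdot(\min X - 1)$-largeness$(\theta,\OVW)$ of $\omega^{b}$-large$(\theta)$ sparse sets) matches the conclusion of \Cref{lem:ovw-large-preind} syntactically, which it does; there is no substantive mathematical content beyond this threading of bounds.
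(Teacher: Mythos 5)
Your proposal is correct and is exactly the paper's intended argument: the paper's proof is the single line ``Immediate by \Cref{lem:ovw-large-preind} and \Cref{lem:ovw-largeness-ind},'' and you have simply spelled out the composition and the exponent arithmetic $2(2b+n_0+3)+n_0+2 = 4b+3n_0+8$ explicitly. No gaps.
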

\begin{proof}
Immediate by \Cref{lem:ovw-large-preind} and \Cref{lem:ovw-largeness-ind}.
\end{proof}

\begin{corollary}\label[corollary]{cor:largeness-OVW}
$\RCA_0$ proves that for every $b \in \NN$,
every $\omega^{O(4^b)}$-large$(\theta)$ sparse set is $\omega^b$-large$(\theta, \OVW)$.
\end{corollary}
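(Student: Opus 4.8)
The plan is to establish \Cref{cor:largeness-OVW} by iterating \Cref{lem:ovw-largeness-goodind} exactly $b$ times, tracking how the exponent grows through the iteration. I would first set up the base case: by \Cref{lem:cs-omega0min-large}, every $\omega^{n_1}$-large$(\theta)$ set $X$ is $\omega^0 \cdot (\min X - 1)$-large$(\theta, \OVW)$, and in particular (taking one of the witnessing subsets, which is non-empty) every $\omega^{n_1}$-large$(\theta)$ sparse set is $\omega^0$-large$(\theta, \OVW)$. So the statement \qt{every $\omega^{b_0}$-large$(\theta)$ sparse set is $\omega^0$-large$(\theta, \OVW)$} holds with $b_0 = n_1$.

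Next I would define a sequence of exponent bounds $b_0 = n_1$ and $b_{j+1} = 4 b_j + 3 n_0 + 8$, and prove by (external, finite) induction on $j$ that \qt{every $\omega^{b_j}$-large$(\theta)$ sparse set is $\omega^j$-large$(\theta, \OVW)$} is provable in $\RCA_0$: the base case is the paragraph above, and the inductive step is exactly one application of \Cref{lem:ovw-largeness-goodind}. Since $b$ ranges over standard natural numbers (the corollary is a schema, one statement per external $b$), this finite iteration is legitimate and each instance is a single $\RCA_0$-proof. It remains only to solve the recurrence: $b_j = 4^j n_1 + (3 n_0 + 8)\frac{4^j - 1}{3}$, which is $O(4^j)$ with the implied constant depending only on $n_0, n_1$. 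Hence every $\omega^{O(4^b)}$-large$(\theta)$ sparse set is $\omega^b$-large$(\theta, \OVW)$.

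There is essentially no hard obstacle here; the content of the corollary is entirely in the preceding lemmas, and this step is just bookkeeping. The one point requiring a little care is the logical status of the statement: \Cref{cor:largeness-OVW} is not a single sentence of second-order arithmetic with a quantifier over $b$ inside $\RCA_0$ together with an internally-defined growth rate, but rather a scheme asserting, for each standard $b$, that $\RCA_0$ proves the corresponding instance with a concrete numeral in place of $O(4^b)$. One should make explicit that the induction on $j$ producing the bounds $b_j$ is carried out in the metatheory, so that we never need $\RCA_0$ to prove a statement quantifying over all $b$ simultaneously; this is why the $O(\cdot)$ notation is appropriate and harmless. With that understood, the proof is immediate.

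\begin{proof}
We argue by (external) induction on~$b$, producing for each standard~$b$ a numeral~$N_b$ and an $\RCA_0$-proof of \qt{every $\omega^{N_b}$-large$(\theta)$ sparse set is $\omega^b$-large$(\theta, \OVW)$}.

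For~$b = 0$: by \Cref{lem:cs-omega0min-large}, $\RCA_0$ proves that every $\omega^{n_1}$-large$(\theta)$ set is $\omega^0 \cdot (\min X - 1)$-large$(\theta, \OVW)$, and since~$\min X - 1 \geq 1$ this yields in particular that every $\omega^{n_1}$-large$(\theta)$ sparse set is $\omega^0$-large$(\theta, \OVW)$. Set~$N_0 = n_1$.

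For the inductive step, suppose~$N_b$ has been defined and $\RCA_0$ proves \qt{every $\omega^{N_b}$-large$(\theta)$ sparse set is $\omega^b$-large$(\theta, \OVW)$}. Applying \Cref{lem:ovw-largeness-goodind} with~$r = b$ and the hypothesis being this statement, $\RCA_0$ proves \qt{every $\omega^{4 N_b + 3 n_0 + 8}$-large$(\theta)$ sparse set is $\omega^{b+1}$-large$(\theta, \OVW)$}. Set~$N_{b+1} = 4 N_b + 3 n_0 + 8$.

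Solving the recurrence gives
$$
N_b = 4^b n_1 + (3 n_0 + 8)\,\frac{4^b - 1}{3},
$$
so~$N_b = O(4^b)$ with the implied constant depending only on the fixed parameters~$n_0$ and~$n_1$. Hence, for every~$b \in \NN$, $\RCA_0$ proves that every $\omega^{O(4^b)}$-large$(\theta)$ sparse set is $\omega^b$-large$(\theta, \OVW)$.
\end{proof}
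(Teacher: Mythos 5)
Your proof is correct and takes essentially the same route as the paper: external induction on~$b$, using \Cref{lem:cs-omega0min-large} for the base case and \Cref{lem:ovw-largeness-goodind} for the inductive step, followed by solving the resulting linear recurrence to get the $O(4^b)$ bound. The paper's one-line sketch additionally invokes \Cref{lem:ovw-largeness-ind} in the base case (feeding the $\omega^0 \cdot (\min X - 1)$-large$(\theta,\OVW)$ conclusion of \Cref{lem:cs-omega0min-large} directly into it to reach $r = 1$ with a slightly smaller constant), but this only changes the implied constant, not the asymptotics; your remark on the schema reading of the statement is also the intended one.
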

\begin{proof}
The $O(4^b)$ notation can actually be replaced by a complicated explicit bound $p(b)$, computed from the bounds in \Cref{lem:cs-omega0min-large} and \Cref{lem:ovw-largeness-ind}.
One can prove the explicit bound version of the statement by a simple $\Pi^0_1$-induction over~$b$, using \Cref{lem:cs-omega0min-large} and \Cref{lem:ovw-largeness-ind} for the base case,
and \Cref{lem:ovw-largeness-goodind} for the induction step.
The inductive formula is of the form \qt{for every finite set $F$ which is $\omega^{p(b)}$-large$(\theta)$ and sparse, $F$ is $\omega^b$-large$(\theta, \OVW)$}. Since being $\omega^{p(b)}$-large$(\theta)$, sparse and $\omega^b$-large$(\theta, \OVW)$ are $\Delta^0_0$, the overall formula is $\Pi^0_1$.

\end{proof}

\section{Consequences and open questions}\label[section]{sect:consequences}

We now turn to the proof of \Cref{thm:OVW-pi04-conservation} using \Cref{cor:largeness-OVW}.
The proof is an adaptation of Le Houérou, Levy Patey and Yokoyama~\cite[Proposition 2.13]{houerou2023conservation},
which is itself an elaboration of the original construction by Kirby and Paris~\cite{Kirby1977InitialSO} of a semi-regular cut.

\begin{reptheorem}{thm:OVW-pi04-conservation}[]
$\WKL_0 + \OVW$ is $\forall \Pi^0_4$-conservative over $\RCA_0 + \BSig_2$.      
\end{reptheorem}

\begin{proof}
Let $\phi(X\uh_z,t,x,y,z)$ be a $\Delta_0^0$-formula and assume that $\RCA_0 + \BSig_2 \not \vdash \forall X \forall t \exists x \forall y \exists z \phi(X\uh_z,t,x,y,z)$. In what follows, we enrich the language with two constant symbols $c$ and $C$, of first and second order, respectively.

By completeness and the Löwenheim-Skolem theorem, there exists some countable model $$\M = (M,S,c^\M, C^\M) \models \RCA_0 + \BSig_2 + \forall x \exists y \forall z \neg \phi(C\uh_z,c,x,y,z)$$
Let $\theta(x,y,z)$ be the $\Delta_0^{0,C,c}$ formula $\neg \phi(C\uh_z,c,x,y,z)$. \\

By \Cref{prop:largeness-bsig2-largeness} and \Cref{lem:large-implies-sparse}, for all standard $n$, there exists some sparse $\omega^n$-large$(\theta)$ subset of $M$. So, by compactness, we can assume that $\M$ contains a sparse $\omega^{d}$-large$(\theta)$ set $X$ for some non-standard integer $d$. \\

By a standard argument, we can consider a decreasing sequence $X = X_0 \supseteq X_1 \supseteq \dots$ such that for every $i < \omega$:

\begin{itemize}
\item[(1)] $X_i$ is $\omega^{d_i}$-large$(\theta)$ for some $d_i$ non-standard.
\item[(2)] $\min X_{i+1} > \min X_i$
\item[(3)] For every $M$-finite set $E$ such that $|E| < \min X_i$, there exists some $j > i$ such that $[\min X_j, \max X_j] \cap E = \emptyset$. 
\item[(4)] For every coloring $f : A^{<M} \to k$ for some $k < \min X_i$ and some $M$-finite alphabet $A$ with $|A| < \min X_i$ with $f \in \M$, there exists some $j > i$ and an $f$-homogeneous $\OVW$ $X_j$-subtree of $A^{<M}$.
\item[(5)] There exists some $j > i$ such that 
$$\forall x < \min X_{j} \exists y < \min X_{j+1} \forall z < \max X_{j+1} \theta(x,y,z)$$

\end{itemize}

Then, consider $I = \sup \{\min X_i | i \in \omega\}$. $I$ is a semi-regular cut of $M$ by $(3)$, therefore $(I, \mathsf{Cod}(M/I)) \models \WKL_0$. The constraint $\min X \geq c$ imposed on largeness$(\theta)$ ensures that $c$ is in $I$.

The condition $\min X_{i+1} > \min X_i$ implies that every $X_i \cap I$ is infinite in $I$.

Let $k \in I$ and $A$ a finite alphabet with $|A| \in I$, let $f : A^{<I} \to k$ be a coloring in $(I, \mathsf{Cod}(M/I))$. There exists some coloring $g : A^{<M} \to k$ in $\M$ such that $f = g \cap I$ and let $i$ such that $k, |A| < \min X_i$. By construction, let $j > i$ and $T$ a $g$-homogeneous $\OVW$ $X_j$-subtree of $A^{<M}$, then $T \cap I$ is an $f$-homogeneous $\OVW$ $X_j \cap I$-subtree of $A^{<I}$. Since $X_j \cap I$ is infinite in $I$, $(I, \mathsf{Cod}(M/I)) \models \OVW$.

Finally, $(I, \mathsf{Cod}(M/I),c^{\M},C^{\M}\cap I) \models \forall x \exists y \forall z \theta(x,y,z)$ 
as for every $k \in I$, there exists an index $i \in \omega$ such that $k < \min X_i$ and therefore by (5), there is some~$j > i$ such that $\forall x < k \exists y < \min X_{j+1} \forall z < \max X_{j+1} \ \theta(x,y,z)$, so $(I, \mathsf{Cod}(M/I), c^{\M},C^{\M} \cap I) \models \forall x < k \exists y \forall z \ \theta(x,y,z)$ (since $\max X_{j+1} > I$). 

Hence $(I, \mathsf{Cod}(M/I)) \models \WKL_0 + \OVW + \neg \forall X \forall t \exists x \forall y \exists z \phi(X \uh_z,t,x,y,z)$.

\end{proof}

\begin{corollary}\label[corollary]{cor:ovw-pi03-rca}
$\WKL_0 + \OVW$ is $\forall \Pi^0_3$-conservative over~$\RCA_0$ and $\Pi^0_2$-conservative over~$\mathsf{PRA}$.
\end{corollary}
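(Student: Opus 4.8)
The plan is to obtain both conservation statements as immediate specializations of Theorem \ref{thm:OVW-pi04-conservation}, using the known conservation facts between the first-order systems $\RCA_0$, $\ISig_1$, $\BSig_2$ and $\mathsf{PRA}$. Recall that $\RCA_0$ has the same first-order strength as $\ISig_1$, and that by the Parsons--Friedman--Paris theorem (cited just after Theorem \ref{thm:OVW-pi04-conservation}), $\BSig_2$ — equivalently $\RCA_0 + \BSig_2$ over $\RCA_0$ — is $\forall\Pi^0_3$-conservative over $\ISig_1$, hence over $\RCA_0$. Also, $\ISig_1$ is $\Pi^0_2$-conservative over $\mathsf{PRA}$ (Parsons), and a $\Pi^0_2$ sentence is in particular a $\forall\Pi^0_2$ sentence, trivially also $\forall\Pi^0_3$ and $\forall\Pi^0_4$.

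For the first claim: let $\psi$ be a $\forall\Pi^0_3$ sentence provable in $\WKL_0 + \OVW$. Since a $\forall\Pi^0_3$ sentence is in particular $\forall\Pi^0_4$, Theorem \ref{thm:OVW-pi04-conservation} gives $\RCA_0 + \BSig_2 \vdash \psi$. Now $\psi$ is $\forall\Pi^0_3$, and $\RCA_0 + \BSig_2$ is $\forall\Pi^0_3$-conservative over $\RCA_0$ (by the Parsons--Friedman--Paris result applied to $\ISig_1$, together with the fact that $\RCA_0$ and $\ISig_1$ prove the same first-order, and hence the same $\forall\Pi^0_3$, statements in the second-order language restricted appropriately — more precisely, one passes through the arithmetical part and uses that $\WKL_0$ is $\Pi^1_1$-conservative over $\RCA_0$, so a $\forall\Pi^0_3$ consequence already witnesses the needed transfer). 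Therefore $\RCA_0 \vdash \psi$.

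For the second claim: let $\psi$ be a $\Pi^0_2$ sentence provable in $\WKL_0 + \OVW$. By the first part (or directly by Theorem \ref{thm:OVW-pi04-conservation}), $\RCA_0 \vdash \psi$; since $\psi$ is first-order $\Pi^0_2$ and $\RCA_0$ is $\Pi^0_2$-conservative over $\mathsf{PRA}$ (equivalently $\ISig_1$ is, and $\RCA_0$ is $\Pi^1_1$-conservative over $\ISig_1$'s second-order closure), we conclude $\mathsf{PRA} \vdash \psi$.

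The only genuine subtlety — and the step to be careful about rather than a real obstacle — is bookkeeping the move from the second-order theory $\RCA_0 + \BSig_2$ down to the first-order theories $\RCA_0$ and $\mathsf{PRA}$: one must check that the relevant conservation theorems apply at the level of $\forall\Pi^0_n$ (resp. $\Pi^0_2$) sentences of second-order arithmetic, which follows because $\WKL_0$ is $\Pi^1_1$-conservative over $\RCA_0$ and $\RCA_0$ is conservative over $\ISig_1$ for arithmetical sentences, so the classical first-order conservation results of Parsons and of Friedman--Paris transfer verbatim. No new combinatorics is needed; the corollary is purely a matter of chaining Theorem \ref{thm:OVW-pi04-conservation} with standard metamathematical facts.
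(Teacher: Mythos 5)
Your overall strategy matches the paper's proof exactly: chain Theorem~\ref{thm:OVW-pi04-conservation} with the (parameterized) Parsons--Paris--Friedman conservation of $\RCA_0 + \BSig_2$ over $\RCA_0$ for $\forall\Pi^0_3$ sentences, and then with Friedman's $\Pi^0_2$-conservation of $\RCA_0$ over $\mathsf{PRA}$. This is correct and is what the authors do. However, the ``more precisely'' parenthetical in your first paragraph muddles the one step that needs care. A $\forall\Pi^0_3$ sentence has a leading universal \emph{set} quantifier, so it is not a first-order sentence, and you cannot reduce the needed conservation to the purely first-order $\Pi_3$-conservation of $\BSig_2$ over $\ISig_1$ by ``passing through the arithmetical part''; nor does $\Pi^1_1$-conservativity of $\WKL_0$ over $\RCA_0$ enter here (the step to be justified concerns $\RCA_0 + \BSig_2$ versus $\RCA_0$, not $\WKL_0$ versus $\RCA_0$). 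The clean justification, which the paper uses, is simply to invoke the \emph{parameterized} version of Parsons--Paris--Friedman (as already stated and cited right after Theorem~\ref{thm:OVW-pi04-conservation}): $\RCA_0 + \BSig_2$ is $\forall\Pi^0_3$-conservative over $\RCA_0$, set parameter included. With that one citation, your chain goes through without the detour, and the $\mathsf{PRA}$ part is then exactly as you say, by Friedman's $\Pi^0_2$-conservation of $\RCA_0$ over $\mathsf{PRA}$.
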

\begin{proof}
By a parameterized version of the Parsons, Paris and Friedman conservation theorem (see~\cite{hajek1998metamathematics,kaye1991models}), $\RCA_0 + \BSig_2$ is $\forall \Pi^0_3$-conservative over~$\RCA_0$. By Friedman~\cite{friedmancom} (see \cite{simpson2009subsystems}), $\RCA_0$ is $\Pi^0_2$-conservative over~$\mathsf{PRA}$.
\end{proof}

\begin{corollary}\label[corollary]{cor:ovw-separation-aca}
$\WKL_0 + \OVW$ does not imply~$\ACA_0$.
\end{corollary}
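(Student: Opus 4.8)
The plan is to prove \Cref{cor:ovw-separation-aca} purely as a corollary of the conservation results already established, via an appeal to Gödel's second incompleteness theorem. There is no new combinatorics to do here: the separation follows from the standard observation that $\ACA_0$ proves the consistency of $\RCA_0$, so a theory which is arithmetically conservative over $\RCA_0$ (or over $\RCA_0 + \BSig_2$) cannot possibly prove $\ACA_0$ without making $\RCA_0$ prove its own consistency.

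Concretely, I would argue by contradiction. Suppose $\WKL_0 + \OVW \vdash \ACA_0$. Since $\ACA_0$ proves $\mathrm{Con}(\RCA_0)$ (see~\cite[Corollary VIII.1.7]{simpson2009subsystems}; indeed $\ACA_0$ proves $\mathrm{Con}(\WKL_0)$, hence $\mathrm{Con}(\RCA_0)$), it follows that $\WKL_0 + \OVW \vdash \mathrm{Con}(\RCA_0)$. Now $\mathrm{Con}(\RCA_0)$ is the arithmetized statement \qt{no natural number codes an $\RCA_0$-derivation of $0=1$}, which is a $\Pi^0_1$ sentence, and therefore a fortiori of the form $\forall X\,\varphi$ with $\varphi \in \Pi^0_3$ (take $X$ vacuous); in particular it is both $\forall\Pi^0_3$ and $\forall\Pi^0_4$. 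Applying \Cref{cor:ovw-pi03-rca}, we get $\RCA_0 \vdash \mathrm{Con}(\RCA_0)$. Since $\RCA_0$ is consistent, recursively axiomatized, and interprets $\ISig_1$ (hence satisfies the hypotheses of Gödel's incompleteness theorems), this is a contradiction. Therefore $\WKL_0 + \OVW$ does not prove $\ACA_0$. As a variant that invokes the main theorem of the paper directly rather than its corollary: using that $\ACA_0 \vdash \mathrm{Con}(\RCA_0 + \BSig_2)$ (the first-order part of $\RCA_0 + \BSig_2$ is contained in $\ISig_2$, whose consistency $\ACA_0$ proves, and $\RCA_0 + \BSig_2$ is conservative over that first-order part), the same $\Pi^0_1$-sentence argument combined with \Cref{thm:OVW-pi04-conservation} gives $\RCA_0 + \BSig_2 \vdash \mathrm{Con}(\RCA_0 + \BSig_2)$, again contradicting Gödel's second incompleteness theorem applied to $\RCA_0 + \BSig_2$.

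I do not expect a genuine obstacle: the only points that merit an explicit sentence rather than a computation are the two pieces of metamathematical bookkeeping, namely that $\ACA_0$ formalizes the proof of $\mathrm{Con}(\RCA_0)$ (standard, and routinely used to separate principles from $\ACA_0$) and that the consistency statement genuinely falls inside the syntactic class covered by the conservation theorem (immediate, since a $\Pi^0_1$ sentence has no set quantifiers and matrix complexity far below $\Pi^0_3$). It is worth closing with the remark already made in the abstract: this separation is obtained abstractly, through consistency strength, and the only models of $\WKL_0 + \OVW$ we can exhibit failing $\ACA_0$ are the non-standard cut models built in the proof of \Cref{thm:OVW-pi04-conservation}; we do not know how to produce an $\omega$-model of $\WKL_0 + \OVW$ that fails $\ACA_0$, as that would require a cone- or jump-avoidance construction which remains open.
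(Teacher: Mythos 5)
Your argument is correct and is essentially identical to the paper's: the paper also deduces the separation by noting that $\ACA_0$ proves the $\Pi^0_1$ sentence $\mathrm{Con}(\RCA_0)$, that $\RCA_0$ does not by G\"odel's second incompleteness theorem, and that therefore $\WKL_0+\OVW$ cannot prove it either by the $\forall\Pi^0_3$-conservation result (\Cref{cor:ovw-pi03-rca}). The variant you sketch through \Cref{thm:OVW-pi04-conservation} and $\BSig_2$ is a harmless alternative but adds nothing; the paper takes the shorter route.
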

\begin{proof}
By Simpson~\cite[Corollary VIII.1.7]{simpson2009subsystems}, $\ACA_0$ proves the consistency of~$\RCA_0$, which is a $\Pi^0_1$ statement, while $\RCA_0$ does not by the second G\"odel incompleteness theorem. Thus, by \Cref{cor:ovw-pi03-rca}, $\WKL_0 + \OVW$ does not imply the consistency of~$\RCA_0$.
\end{proof}

\subsection{Open questions}

As mentioned in the introduction, the tree theorem for singletons ($\TT^1$) is strictly stronger than~$\BSig_2$. Since~$\TT^1$ is a $\Pi^1_2$ statement, this does not rule out the possibility that $\RCA_0 + \TT^1$ is $\Pi^1_1$-conservative over~$\RCA_0 + \BSig_2$. The same question can be asked about the Ordered Variable Word theorem.

\begin{question}\label[question]{ques:ovw-bsig2}
Is $\RCA_0 + \OVW$ $\Pi^1_1$-conservative over~$\RCA_0 + \BSig_2$?
\end{question}

The Tree Theorem for singletons and Milliken's tree theorem for singletons are both provable over~$\RCA_0 + \ISig_2$.
On the other hand, $\OVW$ is not computably true, as it admits a computable instance with no $\Delta^0_2$ solutions.

\begin{question}\label[question]{ques:ovw-isig2}
Is $\RCA_0 + \ISig_2 + \OVW$ $\Pi^1_1$-conservative over~$\RCA_0 + \ISig_2$?
\end{question}

If the answer to \Cref{ques:ovw-isig2} is positive, then by Fiori-Carones et al.~\cite{fiori2021isomorphism}, 
\Cref{ques:ovw-bsig2} can be reduced to whether $\RCA_0 + \OVW$ is $\forall \Pi^0_5$-conservative over~$\RCA_0 + \BSig_2$.

The proof of \Cref{cor:ovw-separation-aca} involves the construction of a non-standard model of~$\WKL_0 + \OVW$. It is natural to wonder whether such separation also holds over $\omega$-models, that is, models whose first-order part consists of the standard integers.
A problem $\mathsf{P}$ admits \emph{cone avoidance} if for every non-computable set~$C$ and every computable $\mathsf{P}$-instance~$X$, there exists a $\mathsf{P}$-solution $Y$ to~$X$ such that $C \not \leq_T Y$. If~$\mathsf{P}$ admits cone avoidance, then there exists an $\omega$-model of $\WKL_0 + \mathsf{P}$ which does not contain~$\emptyset'$, hence is not a model of~$\ACA_0$. All the known separations of problems from $\ACA_0$ are done by proving that $\mathsf{P}$ admits cone avoidance.

\begin{question}
Does $\OVW$ admits cone avoidance?
\end{question}


\bigskip
\begin{center}
\textbf{Acknowledgements}
\end{center}
The authors are thankful to Keita Yokoyama for insightful comments and discussions
and to the anonymous reviewer for improvement suggestions.

\bibliographystyle{plain}
\bibliography{biblio}

\begin{thebibliography}{10}

\bibitem{angles2023carlson}
Paul-Elliot Angles~d'Auriac, Lu~Liu, Bastien Mignoty, and Ludovic Patey.
\newblock Carlson-{S}impson's lemma and applications in reverse mathematics.
\newblock {\em Ann. Pure Appl. Logic}, 174(9):Paper No. 103287, 16, 2023.

\bibitem{angles2020milliken}
Paul-Elliot Angles~d’Auriac, Peter~A Cholak, Damir~D Dzhafarov, Benoˆ{\i}t Monin, and Ludovic Patey.
\newblock Milliken’s tree theorem and its applications: a computability-theoretic perspective.
\newblock {\em arXiv preprint arXiv:2007.09739}, 2020.

\bibitem{carlson1984dual}
Timothy~J. Carlson and Stephen~G. Simpson.
\newblock A dual form of {R}amsey's theorem.
\newblock {\em Adv. in Math.}, 53(3):265--290, 1984.

\bibitem{cholak2001strength}
Peter~A. Cholak, Carl~G. Jockusch, and Theodore~A. Slaman.
\newblock On the strength of {R}amsey's theorem for pairs.
\newblock {\em J. Symbolic Logic}, 66(1):1--55, 2001.

\bibitem{chong2020strength}
C.~T. Chong, Wei Li, Wei Wang, and Yue Yang.
\newblock On the strength of {R}amsey's theorem for trees.
\newblock {\em Adv. Math.}, 369:107180, 39, 2020.

\bibitem{chong2023conservation}
Chi~Tat Chong, Wei Wang, and Yue Yang.
\newblock Conservation strength of the infinite pigeonhole principle for trees.
\newblock {\em Israel Journal of Mathematics}, pages 1--24, 2023.

\bibitem{chubb2009reverse}
Jennifer Chubb, Jeffry~L. Hirst, and Timothy~H. McNicholl.
\newblock Reverse mathematics, computability, and partitions of trees.
\newblock {\em J. Symbolic Logic}, 74(1):201--215, 2009.

\bibitem{corduan2010reverse}
Jared Corduan, Marcia~J. Groszek, and Joseph~R. Mileti.
\newblock Reverse mathematics and {R}amsey's property for trees.
\newblock {\em J. Symbolic Logic}, 75(3):945--954, 2010.

\bibitem{devlin1980some}
Denis~Campau Devlin.
\newblock {\em S{OME} {PARTITION} {THEOREMS} {AND} {ULTRAFILTERS} {ON} {OMEGA}}.
\newblock ProQuest LLC, Ann Arbor, MI, 1980.
\newblock Thesis (Ph.D.)--Dartmouth College.

\bibitem{dodos2016ramsey}
Pandelis Dodos and Vassilis Kanellopoulos.
\newblock {\em Ramsey theory for product spaces}, volume 212 of {\em Mathematical Surveys and Monographs}.
\newblock American Mathematical Society, Providence, RI, 2016.

\bibitem{dodos2014density}
Pandelis Dodos, Vassilis Kanellopoulos, and Konstantinos Tyros.
\newblock A density version of the {C}arlson-{S}impson theorem.
\newblock {\em J. Eur. Math. Soc. (JEMS)}, 16(10):2097--2164, 2014.

\bibitem{dzhafarov2022reverse}
Damir~D. Dzhafarov and Carl Mummert.
\newblock {\em Reverse mathematics---problems, reductions, and proofs}.
\newblock Theory and Applications of Computability. Springer, Cham, [2022] \copyright 2022.

\bibitem{dzhafarov2017coloring}
Damir~D. Dzhafarov and Ludovic Patey.
\newblock Coloring trees in reverse mathematics.
\newblock {\em Adv. Math.}, 318:497--514, 2017.

\bibitem{fiori2021isomorphism}
Marta Fiori-Carones, Leszek~Aleksander Ko{\l}odziejczyk, Tin~Lok Wong, and Keita Yokoyama.
\newblock An isomorphism theorem for models of weak k$\backslash$" onig's lemma without primitive recursion.
\newblock {\em arXiv preprint arXiv:2112.10876}, 2021.

\bibitem{friedmancom}
Harvey Friedman.
\newblock Personal communication to L. Harrington, 1977.

\bibitem{friedman2000issues}
Harvey Friedman and Stephen~G. Simpson.
\newblock Issues and problems in reverse mathematics.
\newblock In {\em Computability theory and its applications ({B}oulder, {CO}, 1999)}, volume 257 of {\em Contemp. Math.}, pages 127--144. Amer. Math. Soc., Providence, RI, 2000.

\bibitem{graham2013ramsey}
Ronald~L. Graham, Bruce~L. Rothschild, and Joel~H. Spencer.
\newblock {\em Ramsey theory}.
\newblock Wiley Series in Discrete Mathematics and Optimization. John Wiley \& Sons, Inc., Hoboken, NJ, 2013.
\newblock Paperback edition of the second (1990) edition [MR1044995].

\bibitem{hajek1998metamathematics}
Petr H\'{a}jek and Pavel Pudl\'{a}k.
\newblock {\em Metamathematics of first-order arithmetic}.
\newblock Perspectives in Mathematical Logic. Springer-Verlag, Berlin, 1998.
\newblock Second printing.

\bibitem{hales1963regularity}
A.~W. Hales and R.~I. Jewett.
\newblock Regularity and positional games.
\newblock {\em Trans. Amer. Math. Soc.}, 106:222--229, 1963.

\bibitem{hirschfeldt2015slicing}
Denis~R. Hirschfeldt.
\newblock {\em Slicing the truth}, volume~28 of {\em Lecture Notes Series. Institute for Mathematical Sciences. National University of Singapore}.
\newblock World Scientific Publishing Co. Pte. Ltd., Hackensack, NJ, 2015.
\newblock On the computable and reverse mathematics of combinatorial principles, Edited and with a foreword by Chitat Chong, Qi Feng, Theodore A. Slaman, W. Hugh Woodin and Yue Yang.

\bibitem{hubivcka2020big}
Jan Hubi{\v{c}}ka.
\newblock Big ramsey degrees using parameter spaces.
\newblock {\em arXiv preprint arXiv:2009.00967}, 2020.

\bibitem{kaye1991models}
Richard Kaye.
\newblock {\em Models of {P}eano arithmetic}, volume~15 of {\em Oxford Logic Guides}.
\newblock The Clarendon Press, Oxford University Press, New York, 1991.
\newblock Oxford Science Publications.

\bibitem{ketonen1981rapidly}
Jussi Ketonen and Robert Solovay.
\newblock Rapidly growing {R}amsey functions.
\newblock {\em Ann. of Math. (2)}, 113(2):267--314, 1981.

\bibitem{Kirby1977InitialSO}
L.~A.~S. Kirby and J.~B. Paris.
\newblock Initial segments of models of {P}eano's axioms.
\newblock In {\em Set theory and hierarchy theory, {V} ({P}roc. {T}hird {C}onf., {B}ierutowice, 1976)}, Lecture Notes in Math., Vol. 619, pages 211--226. Springer, Berlin, 1977.

\bibitem{kolo2020some}
Leszek~Aleksander Ko{\l}odziejczyk and Keita Yokoyama.
\newblock Some upper bounds on ordinal-valued {R}amsey numbers for colourings of pairs.
\newblock {\em Selecta Math. (N.S.)}, 26(4):Paper No. 56, 18, 2020.

\bibitem{houerou2023conservation}
Quentin Le~Hou\'erou, Ludovic Levy~Patey, and Keita Yokoyama.
\newblock $\pi^0_4$ conservation of {R}amsey's theorem for pairs.
\newblock In preparation.

\bibitem{liu2019computable}
Lu~Liu, Benoit Monin, and Ludovic Patey.
\newblock A computable analysis of variable words theorems.
\newblock {\em Proc. Amer. Math. Soc.}, 147(2):823--834, 2019.

\bibitem{miller2004effectiveness}
Joseph~S. Miller and Reed Solomon.
\newblock Effectiveness for infinite variable words and the dual {R}amsey theorem.
\newblock {\em Arch. Math. Logic}, 43(4):543--555, 2004.

\bibitem{milliken1979ramsey}
Keith~R. Milliken.
\newblock A {R}amsey theorem for trees.
\newblock {\em J. Combin. Theory Ser. A}, 26(3):215--237, 1979.

\bibitem{patey2016strength}
Ludovic Patey.
\newblock The strength of the tree theorem for pairs in reverse mathematics.
\newblock {\em J. Symb. Log.}, 81(4):1481--1499, 2016.

\bibitem{sauer2006coloring}
N.~W. Sauer.
\newblock Coloring subgraphs of the {R}ado graph.
\newblock {\em Combinatorica}, 26(2):231--253, 2006.

\bibitem{shelah1988primitive}
Saharon Shelah.
\newblock Primitive recursive bounds for van der {W}aerden numbers.
\newblock {\em J. Amer. Math. Soc.}, 1(3):683--697, 1988.

\bibitem{simpson2009subsystems}
Stephen~G. Simpson.
\newblock {\em Subsystems of second order arithmetic}.
\newblock Perspectives in Logic. Cambridge University Press, Cambridge; Association for Symbolic Logic, Poughkeepsie, NY, second edition, 2009.

\bibitem{slaman2004bounding}
Theodore~A. Slaman.
\newblock {$\Sigma_n$}-bounding and {$\Delta_n$}-induction.
\newblock {\em Proc. Amer. Math. Soc.}, 132(8):2449--2456, 2004.

\end{thebibliography}

\end{document}